\tikzset{cross/.style={cross out, draw=black, minimum size=2*(#1-\pgflinewidth), inner sep=0pt, outer sep=0pt},
cross/.default={1pt}}
\newtheorem{thm}{Theorem}[section]
\newtheorem{cor}[thm]{Corollary}
\newtheorem{prop}[thm]{Proposition}
\newtheorem{conj}{Conjecture}
\newtheorem*{ex*}{Example}
\theoremstyle{definition}
\theoremstyle{remark}
\newtheorem{rem}[thm]{Remark}
\numberwithin{equation}{section}
\definecolor{esperance}{rgb}{0.0,0.5,0.0}
\newcommand{\R}{\mathbb{R}}
\newcommand{\Z}{\mathbb{Z}}
\DeclareMathOperator{\diam}{diam}
\DeclareMathOperator{\card}{card}
\newcommand{\Bad}{\mb{Bad}}
\newcommand{\Sing}{\mb{Sing}}
\newcommand{\DI}{\mb{DI}}
\newcommand{\vol}{\mathrm{vol}}
\newcommand{\al}{\alpha}
\newcommand{\ga}{\gamma}
\newcommand{\del}{\delta}
\newcommand{\Del}{\Delta}
\newcommand{\lam}{\lambda}
\newcommand{\Lam}{\Lambda}
\newcommand{\eps}{\epsilon}
\newcommand{\cC}{\mathcal{C}}
\newcommand{\cH}{\mathcal{H}}
\newcommand{\cX}{\mathcal{X}}
\newcommand{\bR}{\mathbb{R}}
\newcommand{\bZ}{\mathbb{Z}}
\newcommand{\bQ}{\mathbb{Q}}
\newcommand{\bN}{\mathbb{N}}
\newcommand{\SL}{\operatorname{SL}}
\newcommand{\ASL}{\operatorname{ASL}}
\newcommand\wt[1]{\widetilde{#1}}
\newcommand\wh[1]{\widehat{#1}}
\newcommand\idist[1]{\langle#1\rangle}
\newcommand\mb[1]{\mathbf{#1}}
\newcommand\tb[1]{\textbf{#1}}
\newcommand{\onto}{\xymatrix{\ar@{>>}[r]&}}
\newcommand{\eq}[1]
{
\begin{equation*}
{#1}
\end{equation*}
}
\newcommand{\eqlabel}[2]
{
\begin{equation}
{#2}\label{#1}
\end{equation}
}
\newcommand*{\rom}[1]{\expandafter\@slowromancap\romannumeral #1@}
\begin{document}

\title{Infinitely badly approximable affine forms}

\date{}

\author{Taehyeong Kim}
\address{Department of Mathematics, Brandeis University, Waltham MA}
\email{taehyeongkim@brandeis.edu}

\thanks{}


\keywords{}

\def\thefootnote{}
\footnote{2020 {\it Mathematics
Subject Classification}: Primary 11J20 ; Secondary 37A17, 28A78}
\footnote{The author was supported by the ERC grant HomDyn, ID 833423.}

\begin{abstract}
A pair $(A,\mb{b})$ of a real $m\times n$ matrix $A$ and $\mb{b}\in\bR^m$ is said to be \textit{infinitely badly approximable} if
\[
\liminf_{\mb{q}\in\bZ^n, \|\mb{q}\|\to\infty} \|\mb{q}\|^{\frac{n}{m}}\|A\mb{q}-\mb{b}\|_{\bZ} =\infty,
\]
where $\|\cdot\|_\bZ$ denotes the distance from the nearest integer vector. In this article, we introduce a novel concept of singularity for $(A,\mb{b})$ and characterize the infinitely badly approximable property by this singular property. 
As an application, we compute the Hausdorff dimension of the infinitely badly approximable set. We also discuss dynamical interpretations on the space of grids in $\bR^{m+n}$.
\end{abstract}
\maketitle
\section{Introduction}
\subsection{Background and motivation}\label{subsec_1.1}
Let $m,n$ be positive integers, and denote by $M_{m,n}(\bR)$ the set of $m\times n$ real matrices.
We start by introducing the following classical definition. For any $\eps>0$, we say that $A\in M_{m,n}(\bR)$ is \textit{$\eps$-badly approximable} if 
\[
\liminf_{\mb{q}\in\bZ^n, \|\mb{q}\|\to\infty} \|\mb{q}\|^{\frac{n}{m}}\|A\mb{q}\|_\bZ \geq \eps.
\]
Here and hereafter, $\|\mb{x}\|=\max_{1\leq i\leq k} |x_i|$ and $\|\mb{x}\|_\bZ = \min_{\mb{n}\in\bZ^k}\|\mb{x}-\mb{n}\|$ for $\mb{x}\in\bR^k$.
We say that $A\in M_{m,n}(\bR)$ is \textit{badly approximable} if it is $\eps$-badly approximable for some $\eps>0$.
It is well-known that the set of badly approximable matrices is of zero Lebesgue measure \cite{Khi26}, but has full Hausdorff dimension \cite{Sch69}. See \cite{BK15,Sim18} for the set of $\eps$-badly approximable matrices.

Dirichlet's theorem (1842) says that for any $A\in M_{m,n}(\bR)$ there exist infinitely many $\mb{q}\in\bZ^n$ such that
$\|A\mb{q}\|_\bZ \leq \|\mb{q}\|^{-\frac{n}{m}},$ 
hence there is no $\eps$-badly approximable matrix for any $\eps>1$. However, if we consider inhomogeneous Diophantine approximation, the situation becomes completely different, and investigating this situation is the goal of the present paper.

More precisely, given $\eps>0$, we say that a pair $(A,\mb{b})\in M_{m,n}(\bR)\times \bR^m$ is \textit{$\eps$-badly approximable} if 
\[
\liminf_{\mb{q}\in\bZ^n, \|\mb{q}\|\to\infty} \|\mb{q}\|^{\frac{n}{m}}\|A\mb{q}-\mb{b}\|_{\bZ} \geq \eps.
\]  
Denote by $\Bad(\eps)$ the set of $\eps$-badly approximable pairs in $M_{m,n}(\bR)\times \bR^m$. For $A\in M_{m,n}(\bR)$ and $\mb{b}\in \bR^m$, consider the slices of $\Bad(\eps)$ as follows:
\[\begin{split}
\Bad_A(\eps)&=\{\mb{b}\in\bR^m: (A,\mb{b})\in\Bad(\eps)\},\\ 
\Bad^{\mb{b}}(\eps)&=\{A\in M_{m,n}(\bR): (A,\mb{b})\in\Bad(\eps)\}.
\end{split}\]
A pair $(A,\mb{b})\in M_{m,n}(\bR)\times \bR^m$ is said to be \textit{infinitely badly approximable} if the pair $(A,\mb{b})$ is $\eps$-badly approximable for all $\eps>0$, or equivalently,
\[
\liminf_{\mb{q}\in\bZ^n, \|\mb{q}\|\to\infty} \|\mb{q}\|^{\frac{n}{m}}\|A\mb{q}-\mb{b}\|_{\bZ} = \infty.
\]  
Denote
\[\Bad(\infty)=\bigcap_{\eps>0}\Bad(\eps),\ \Bad_A(\infty)=\bigcap_{\eps>0}\Bad_A(\eps),\ \Bad^{\mb{b}}(\infty)=\bigcap_{\eps>0}\Bad^{\mb{b}}(\eps).
\]

We first remark that if $\mb{b}\in\bZ^m$, we return to the homogeneous Diophantine approximation, so we will usually only consider the case $\mb{b}\in\bR^m\setminus\bZ^m$.
Let us discuss the following motivating observation: For any $A\in M_{m,n}(\bR)$, Kronecker's theorem (see e.g. \cite[Chapter \rom{3}, Theorem \rom{4}]{Cas57}) asserts that the sequence $\left\{A\mb{q} \right\}_{\mb{q}\in\bZ^n}$ modulo $1$ is dense in $\bR^m/\bZ^m$ if and only if the subgroup ${^{t}A}\bZ^m + \bZ^n$ of $\bR^n$ has maximal rank $m+n$ over $\bZ$, where $^t A$ denotes the transpose of the matrix $A$. It follows that if $^t A$ is rational in the sense that ${^{t}A}\mb{y}\in\bZ^n$ for some nonzero $\mb{y}\in\bZ^m$, then there is an open subset $U$ of $\bR^m/\bZ^m$ not including the sequence $\left\{A\mb{q} \right\}_{\mb{q}\in\bZ^n}$ modulo $1$, hence for any $\mb{b}\in U$ the pair $(A,\mb{b})$ is infinitely badly approximable.

Through this observation, one may expect 
\eqlabel{Eq_expect}{\begin{split}
\text{a certain connection between ``singularity" of } ^tA \\ \text{ and infinitely badly approximability of } (A,\mb{b}).\end{split}
}
Let us recall the classical definition of singularity. We say that $A\in M_{m,n}(\bR)$ is \textit{singular} if for any $\eps>0$ for all sufficiently large $X$ the inequalities
\eqlabel{Eq_OldSing}{
\|A\mb{q}\|_\bZ < \eps X^{-\frac{n}{m}} \quad\text{and}\quad 0<\|\mb{q}\|<X
}
have an integer solution $\mb{q}\in\bZ^n$.

One possible way to address \eqref{Eq_expect} comes from \cite{BL05}, although it is not explicitly stated in that paper.
We denote by $w(A,\mb{b})$ the supremum of the real numbers $w$ for which, for arbitrarily large $X$, the inequalities
$$\|A\mb{q}-\mb{b}\|_\bZ < X^{-w} \quad\text{and}\quad\|\mb{q}\| < X$$
have an integer solution $\mb{q}\in\bZ^n$. We also denote by $\wh{w}(A)$ the supremum of the real numbers $w$ for which, for all sufficiently large $X$, the inequalities
$$\|A\mb{q}\|_\bZ < X^{-w} \quad\text{and}\quad 0<\|\mb{q}\| < X$$
have an integer solution $\mb{q}\in\bZ^n$. 
It follows from \cite[Theorem]{BL05} that for almost all $\mb{b}\in\bR^m$,
\[
w(A,\mb{b}) = \frac{1}{\wh{w}({^tA})}.
\] 
If $\wh{w}(^t A) > \frac{m}{n}$, the set of $\mb{b}\in\bR^m$ such that $w(A,\mb{b})<\frac{n}{m}$ is of full Lebesgue measure, hence $\Bad_A(\infty)$ is of full Lebesgue measure. Note that $\wh{w}(^t A) > \frac{m}{n}$ implies that $^tA$ is singular, or equivalently, $A$ is singular. 

The author \cite{Kim} strengthened this observation under a certain weaker assumption. 
We remark that even if $A$ is singular, the set $\Bad_A(\infty)$ may have zero Lebesgue measure (see \cite[Theorem 8.4]{MRS}).

In terms of Hausdorff dimension, it was shown in \cite[Theorem 1.5]{ET11} that if $A\in M_{m,n}(\bR)$ is singular, then the set $\Bad_A(\infty)$ has full Hausdorff dimension (in fact, it is winning on some fractals). 

To the authors' knowledge, these are the only known results in this direction.
In this paper, we will resolve \eqref{Eq_expect}, compute Hausdorff dimension for infinitely badly approximable affine forms, and discuss dynamical interpretations on the space of grids in $\bR^{m+n}$.

\subsection{Main results}\label{subsec_1.2}
One of the main results of this paper is to completely address \eqref{Eq_expect} with the following new definition of singularity:
For fixed $\mb{b}\in\bR^m$, we say that a matrix $^{t}A\in M_{n,m}(\bR)$ is \textit{singular for $\mb{b}$} if 
for any $\eps>0$ for all sufficiently large $X$ the inequalities
\eqlabel{Eq_NewSing}{
\|{^{t}A}\mb{y}\|_\bZ < \eps |\mb{b}\cdot \mb{y}|_\bZ X^{-\frac{m}{n}} \quad \text{and}\quad \|\mb{y}\|< |\mb{b}\cdot\mb{y}|_\bZ X
}
have an integer solution $\mb{y}\in \bZ^m$, where $\cdot$ stands for the usual dot product and $|\cdot|_\bZ$ denotes the distance from the nearest integer. Denote by $\Sing(\mb{b})$ the set of $A\in M_{m,n}(\bR)$ such that $^{t}A$ is singular for $\mb{b}$.

The following theorem can be seen as the transference principle between infinitely badly approximable affine forms and singular linear forms for translation vectors, which addresses \eqref{Eq_expect}.
\begin{thm}\label{Thm_1} 
For any $(A,\mb{b})\in M_{m,n}(\bR)\times \bR^m$, the pair $(A,\mb{b})$ is infinitely badly approximable if and only if  $^tA$ is singular for $\mb{b}$.
In particular, for any $\mb{b}\in\bR^m$,
\[\Bad^{\mb{b}}(\infty) = \Sing(\mb{b})\quad\text{and}\quad \Bad(\infty) = \bigcup_{\mb{b}\in\bR^m} \Sing(\mb{b})\times\{\mb{b}\}.\]  
\end{thm}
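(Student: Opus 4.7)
The theorem is a transference-style equivalence, and I will establish the two implications separately.

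\textbf{($\Leftarrow$) Singular for $\mb{b}$ implies infinite badness.} This is the easier direction, proved by a direct bilinear-pairing contradiction. Fix $C>0$ and suppose, for a contradiction, that $\|A\mb{q}-\mb{b}-\mb{p}\|<CQ^{-n/m}$ for some $\mb{q}$ with $Q:=\|\mb{q}\|$ arbitrarily large and some $\mb{p}\in\bZ^m$. I apply the singularity hypothesis at scale $X:=Q^{n/m}/(2mC)$ with $\eps:=1/(4n(2mC)^{m/n})$ to obtain $\mb{y}\in\bZ^m\setminus\{0\}$, $\mb{p}^*\in\bZ^n$, $n_0\in\bZ$ such that, writing $\beta:=|\mb{b}\cdot\mb{y}-n_0|>0$ and $\mb{r}:={^tA}\mb{y}-\mb{p}^*$, one has $\|\mb{r}\|<\eps\beta X^{-m/n}$ and $\|\mb{y}\|<\beta X$. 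Expanding $\mb{y}\cdot(A\mb{q}-\mb{b}-\mb{p})$ via ${^tA}\mb{y}=\mb{p}^*+\mb{r}$ and $\mb{b}\cdot\mb{y}=n_0+(\mb{b}\cdot\mb{y}-n_0)$ and collecting the integer terms yields, for some $N\in\bZ$, the identity
\[
\mb{y}\cdot(A\mb{q}-\mb{b}-\mb{p}) = N + \mb{r}\cdot\mb{q} - (\mb{b}\cdot\mb{y}-n_0).
\]
The parameter choices make $|\mb{r}\cdot\mb{q}|<\beta/4$ and $|\mb{y}\cdot(A\mb{q}-\mb{b}-\mb{p})|<\beta/2$; together with $\beta\leq 1/2$, these force $N=0$, and then the identity gives $\beta\leq|\mb{r}\cdot\mb{q}|+|\mb{y}\cdot(A\mb{q}-\mb{b}-\mb{p})|<\beta$, a contradiction.

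\textbf{($\Rightarrow$) Infinite badness implies singular for $\mb{b}$.} This is the harder direction. I fix $\eps>0$ and arbitrarily large $X$, and must construct $\mb{y}\in\bZ^m$ satisfying \eqref{Eq_NewSing}. The first step is to translate the infinite-badness hypothesis into a covering-radius statement: setting $Q:=X^{m/n}$ and choosing $C=C(\eps,m,n)$ large, the infinite badness of $(A,\mb{b})$ implies (after handling finitely many small $\mb{q}$, which is possible since $\mb{b}\notin A\bZ^n+\bZ^m$) that the affine lattice $\Lambda_A+(0,-\mb{b})$ avoids the symmetric box $B:=\{\|\mb{q}\|\leq Q,\|\mb{w}\|\leq CQ^{-n/m}\}$ in $\bR^{m+n}$. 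Hence $\mu(B,\Lambda_A)>1$, which forces $\lambda_{m+n}(B,\Lambda_A)\geq 2/(m+n)$. Mahler's reciprocity then yields an upper bound on $\lambda_1(B^*,\Lambda_A^*)$, producing a nonzero $\mb{y}\in\bZ^m$ with $\|{^tA}\mb{y}\|_\bZ\leq C_{m,n}X^{-m/n}$ and $\|\mb{y}\|\leq C_{m,n}X/C$.

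The main obstacle is that these bounds are a constant factor weaker than what \eqref{Eq_NewSing} requires: the singular inequalities carry the weight $\beta=|\mb{b}\cdot\mb{y}|_\bZ$ on their right-hand sides, and the Mahler-produced $\mb{y}$ may have $\beta$ arbitrarily small, or even satisfy $\mb{b}\cdot\mb{y}\in\bZ$ so that $\beta=0$. To overcome this I expect to lift the argument to $\bR^{m+n+1}$, working instead with the unimodular lattice $\Lambda_A^{(\mb{b})}:={\begin{pmatrix} I_m & 0 & 0\\ {^tA} & I_n & 0\\ \mb{b}^T & 0 & 1\end{pmatrix}}\bZ^{m+n+1}$, whose extra coordinate encodes exactly $\mb{b}\cdot\mb{y}+n_0$ (so $\beta=|s|$ is directly tracked by the ambient coordinate) and its dual $\Lambda_{A,\mb{b}}$. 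The infinite-badness of $(A,\mb{b})$ translates into a covering-type statement for an appropriate symmetric convex body in $\Lambda_{A,\mb{b}}\subset\bR^{m+n+1}$ for which the $r$-coordinate is bounded away from zero; Mahler's reciprocity applied in the extended setting then produces a vector of $\Lambda_A^{(\mb{b})}$ whose three blocks $(\mb{y},\mb{z},s)$ automatically satisfy the singular inequalities, with $\beta=|s|$ bounded below by the dual normalization. Executing this lift and controlling the constants through the $(m+n+1)$-dimensional Minkowski/Mahler transference is the main technical work of the proof.
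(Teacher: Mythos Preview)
Your $(\Leftarrow)$ direction is correct and complete; the bilinear pairing argument you give is exactly the proof of the ``necessary'' half of Cassels' inhomogeneous transference theorem (Theorem~\rom{17} in Chapter~\rom{5} of \cite{Cas57}), specialized to this setting.

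For the $(\Rightarrow)$ direction, however, there is a genuine gap. The paper does not argue from scratch: it invokes the \emph{sufficient} half of Cassels' Theorem~\rom{17}, which says (contrapositively) that if the system $\|A\mb{q}-\mb{b}\|_\bZ\leq C$, $\|\mb{q}\|\leq X$ has no integer solution, then there exists $\mb{y}\in\bZ^m$ with
\[
|\mb{b}\cdot\mb{y}|_\bZ > \gamma\max\bigl(X\|{^tA}\mb{y}\|_\bZ,\,C\|\mb{y}\|\bigr),\qquad \gamma=2^{m-1}\bigl((m+n)!\bigr)^{-2}.
\]
This is precisely the inequality $\|{^tA}\mb{y}\|_\bZ<\gamma^{-1}\beta X^{-1}$, $\|\mb{y}\|<\gamma^{-1}\beta C^{-1}$ with $\beta=|\mb{b}\cdot\mb{y}|_\bZ$, and after the substitution $T\mapsto X$ it yields the singular condition directly. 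So the paper's proof of Theorem~\ref{Thm_1} is a one-line deduction from this black box.

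Your proposed route---lift to the unimodular lattice in $\bR^{m+n+1}$ with last coordinate $\mb{b}\cdot\mb{y}+n_0$, and apply Mahler reciprocity---does not, as stated, produce what you need. In the lifted picture, infinite badness says that every lattice point of $\Lambda_{A,\mb{b}}$ inside the box $\{\|\mb{w}\|\leq CQ^{-n/m},\ \|\mb{q}\|\leq Q,\ |r|\leq 3/2\}$ has $r=0$; hence $\lambda_{m+n+1}>1$ and Mahler gives a short dual vector $(\mb{y},{^tA}\mb{y}+\mb{z},s)$ with $s=\mb{b}\cdot\mb{y}+n_0$. But Mahler reciprocity only bounds all three blocks from \emph{above}; it gives $|s|\leq c_0$, not the lower bound $|s|>\gamma\max(\cdot,\cdot)$ that the singular inequalities demand. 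You correctly identified this obstacle at the unlifted stage, and the lift by itself does not remove it: the short dual vector could still have $|s|$ arbitrarily small compared to $\|\mb{y}\|/X$ and $X^{m/n}\|{^tA}\mb{y}\|_\bZ$. Closing this gap is exactly the content of the sufficient half of Cassels' Theorem~\rom{17}, whose proof (via Minkowski's theorem applied to a carefully chosen auxiliary system of $m+n$ forms, not via successive-minima duality) is where the real work lies. You should either cite that result, as the paper does, or supply its proof; the phrase ``Mahler's reciprocity applied in the extended setting'' does not by itself carry the argument.
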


\begin{rem}\label{Rem_1}
Since $|\mb{b}\cdot \mb{y}|_\bZ \leq 1/2$ for any $\mb{b}\in\bR^m$ and $\mb{y}\in\bZ^m$, if $^t A$ is singular for $\mb{b}$, then $^t A$ is singular, hence $A$ is singular. 
Therefore, for any $\mb{b}\in\bR^m$, the set $\Bad^{\mb{b}}(\infty)$ is contained in the set of singular matrices.
\end{rem}

It is well-known that the singular set is just rationals in the one dimensional case, but this set is nontrivial in higher dimensional cases. Hence we consider $(m,n)=(1,1)$ and $(m,n)\neq(1,1)$, separately. 
The one dimensional case is simple.
\begin{cor}
When $(m,n)=(1,1)$, for any $b\in\bR\setminus\bZ$, 
\[
\begin{dcases}
    \Bad^b(\infty)= \bQ  &  \text{if } b\notin \bQ,\\
    \Bad^b(\infty)=  \{r/s\in\bQ : \gcd(r,s)=1, q \nmid s\}&  \text{if } b=p/q \text{ with } \gcd(p,q)=1.  
  \end{dcases} \]
\end{cor}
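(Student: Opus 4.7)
The plan is to use Theorem \ref{Thm_1} and Remark \ref{Rem_1} to reduce the corollary to a direct analysis of the condition \eqref{Eq_NewSing} in dimension $(m,n)=(1,1)$, after which the verification becomes an elementary divisibility computation using a single explicit test integer.

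By Theorem \ref{Thm_1}, $\Bad^b(\infty) = \Sing(b)$, and by Remark \ref{Rem_1}, $\Sing(b)$ is contained in the set of classically singular matrices, which in dimension $(1,1)$ is exactly $\bQ$ (a rational $r/s$ is singular via $y=s$, while for irrational $A$ the classical inequality fails along the denominators of the continued-fraction convergents of $A$). Hence $\Bad^b(\infty) \subseteq \bQ$, and it remains to decide, for each $A = r/s \in \bQ$ with $\gcd(r,s)=1$ and $s>0$, whether $^{t}A = A$ satisfies \eqref{Eq_NewSing} for $b$.

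For the inclusion showing the advertised rationals lie in $\Bad^b(\infty)$, the key observation is that taking $y=s$ forces $\|Ay\|_\bZ = 0$, so the first inequality in \eqref{Eq_NewSing} is automatic as soon as $|bs|_\bZ > 0$, while the second reads $s < |bs|_\bZ X$ and holds for all sufficiently large $X$. The condition $|bs|_\bZ > 0$ is equivalent to $bs \notin \bZ$, which holds automatically when $b \notin \bQ$ (yielding $\bQ \subseteq \Bad^b(\infty)$), and when $b = p/q$ in lowest terms it holds iff $q \nmid s$, by coprimality of $p$ and $q$ (yielding $\{r/s : q \nmid s\} \subseteq \Bad^b(\infty)$).

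For the reverse inclusion in the rational case $b = p/q$, I will show that if $q \mid s$ then $A = r/s$ fails \eqref{Eq_NewSing}. Fix $\eps>0$ and let $X > \eps s / 2$. For any nonzero $y \in \bZ$, either $s \mid y$, in which case $by \in \bZ$ (because $q \mid s \mid y$), so $|by|_\bZ = 0$ and the second inequality $|y| < |by|_\bZ X$ is impossible; or $s \nmid y$, in which case $\|Ay\|_\bZ \geq 1/s$ (since $\gcd(r,s)=1$), so the first inequality demands $|by|_\bZ > X/(\eps s) > 1/2$, contradicting $|by|_\bZ \leq 1/2$. Thus no admissible $y$ exists for such $X$, and $A \notin \Sing(b)$. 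The only conceptual step is the invocation of Theorem \ref{Thm_1}; the remaining argument is an elementary dichotomy on divisibility by $s$, so I do not expect any real obstacle.
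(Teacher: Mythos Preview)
Your proof is correct and follows the paper's approach exactly: invoke Theorem~\ref{Thm_1} to reduce to computing $\Sing(b)$, then verify directly. The paper's own proof is a single sentence stating that the computation of $\Sing(b)$ ``can be easily checked''; you have simply supplied those details (the $y=s$ witness for the inclusion and the $s\mid y$ vs.\ $s\nmid y$ dichotomy for the exclusion), so there is nothing to compare.
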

\begin{proof}
Using Theorem \ref{Thm_1}, it is enough to show that $\Sing(b)$ is equal to the right hand side, and it can be easily checked. 
\end{proof}

Now assume $(m,n)\neq (1,1)$. If $\mb{b}\in\bR^m\setminus\bZ^m$, then there exists nonzero $\mb{y}\in\bZ^m$ such that $|\mb{b}\cdot\mb{y}|_\bZ > 0$. Since the set of $A$ such that $^tA \mb{y} \in\bZ^n$ is contained in $\Sing(\mb{b})$, we have
\[
\dim_H \Sing(\mb{b}) \geq \dim_H\{A\in M_{m,n}(\bR): {^tA\mb{y}}\in\bZ^n\} = n(m-1).
\]
Here and hereafter, $\dim_H$ refers to the Hausdorff dimension.
Theorem \ref{Thm_1} implies that 
\[
\dim_H \Bad^{\mb{b}}(\infty) \geq n(m-1) \quad\text{for any }\mb{b}\in\bR^m\setminus\bZ^m,
\]
and hence
\[
\dim_H \Bad(\infty) \geq n(m-1)+m.
\]
On the other hand, using Hausdorff dimension of singular sets \cite{C11, CC16, KKLM17, DFSU}, it follows from Remark \ref{Rem_1} that
\[
\dim_H \Bad^{\mb{b}}(\infty) \leq mn-\frac{mn}{m+n} \ \text{ and }\  \dim_H \Bad(\infty) \leq mn+m-\frac{mn}{m+n}. 
\]
It is natural to ask the exact Hausdorff dimension for $\Bad^\mb{b}(\infty)$ and $\Bad(\infty)$ and our main application of Theorem \ref{Thm_1} answers this question in the case that $m=1$ and $n\geq 2$.
\begin{thm}\label{Thm_2} Assume that $m=1$ and $n\geq 2$.
For any $b\in\bR\setminus\bZ$,
\[\dim_H \Bad^{b}(\infty) = \frac{n^2}{n+1}  \quad \text{and}\quad \dim_H \Bad(\infty) = \frac{n^2}{n+1}+1.\]
Moreover, if $b\in\bQ\setminus\bZ$, there is a constant $C>0$ such that for any $t>1$ and all small enough $\eps>0$,
\[
\frac{n^2}{n+1}+ \eps^{t}\leq \dim_H \Bad^{b}(1/\eps) \leq \frac{n^2}{n+1}+C\eps^{\frac{1}{2}}.
\]
\end{thm}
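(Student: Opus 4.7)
The plan is to use Theorem \ref{Thm_1} to reduce everything to computing $\dim_H\Sing(b)$ (and its finite-threshold analog), then to adapt the Cantor-tree constructions of singular vectors in the literature so as to incorporate the extra arithmetic side condition coming from $b$. The upper bounds come for free: by Remark \ref{Rem_1}, $\Bad^b(\infty)$ sits inside the classical singular set in $M_{1,n}(\bR)$, whose Hausdorff dimension equals $\tfrac{n^2}{n+1}$ by \cite{C11, CC16, KKLM17, DFSU}, and the estimate $\dim_H\Bad(\infty)\le\tfrac{n^2}{n+1}+1$ then follows from $\dim_H(E\times F)\le\dim_H E+\overline{\dim}_B F$ applied to $\Bad(\infty)\subseteq(\text{classical singular set})\times\bR$.

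\textbf{Asymptotic lower bounds.} For $\dim_H\Sing(b)\ge\tfrac{n^2}{n+1}$, I would adapt the self-similar Cantor-tree construction of singular vectors from \cite{CC16, DFSU}. At each level of that tree one chooses an integer witness $y\in\bZ$; the key modification is to impose the side condition $|by|_\bZ\ge c$ for a fixed constant $c>0$. When $b=p_0/q_0\in\bQ\setminus\bZ$ this just means $q_0\nmid y$, which removes a single residue class and does not decrease the asymptotic branching rate. When $b\in\bR\setminus\bQ$, equidistribution of $\{by\}$ (or the three-distance theorem) provides a positive proportion of admissible integers at every scale. Once this side condition is in force, \eqref{Eq_NewSing} reduces up to constants to \eqref{Eq_OldSing}, so the limit set is contained in $\Sing(b)=\Bad^b(\infty)$ by Theorem \ref{Thm_1}. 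For $\dim_H\Bad(\infty)\ge\tfrac{n^2}{n+1}+1$, I would carry out this construction uniformly in $b$ on a compact interval $I\subset\bR\setminus\bZ$ and conclude via a Marstrand-type slicing estimate applied to $\Bad(\infty)\supseteq\bigcup_{b\in I}\Sing(b)\times\{b\}$.

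\textbf{Quantitative bounds for $b\in\bQ\setminus\bZ$.} For the lower bound, since being $1/\eps$-bad only requires $\|q\|^{1/n}\|Aq-b\|_\bZ\ge 1/\eps$ rather than divergence, one may admit coarser approximations in the Cantor construction at each level, giving wider children and hence a polynomial dimension gain; tracking this carefully should yield $\eps^t$ for any $t>1$. For the upper bound I would derive a finite-scale quantitative version of Theorem \ref{Thm_1}, showing that membership in $\Bad^b(1/\eps)$ forces \eqref{Eq_NewSing} with an effective parameter degrading polynomially in $\eps$, and combine this with quantitative Hausdorff dimension estimates for such effective singular sets (in the spirit of \cite{KKLM17}); the exponent $1/2$ should emerge from the specific quadratic behavior of the transference.

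\textbf{Main obstacle.} The delicate point is the arithmetic-constrained Cantor construction. In the standard singular-vector construction, the integer witnesses are selected greedily to optimize approximation; here one must verify that the additional restriction $|by|_\bZ\ge c$ (and, in the quantitative case, further finite-scale constraints) leaves enough witnesses at every scale to preserve the full branching rate. For irrational $b$ this requires a precise understanding of the distribution of integers $y$ close to $(1/b)\bZ$. The explicit rate $\eps^{1/2}$ in the quantitative upper bound is the second technical point, requiring a careful effective form of the transference behind Theorem \ref{Thm_1}.
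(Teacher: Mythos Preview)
Your plan is essentially the paper's: prove a quantitative transference (this is Theorem~\ref{Thm_Sec2}, yielding $\DI_{c_1\eps^{1/n}}(b)\subset\Bad^b(1/\eps)\subset\DI_{c_2\eps^{1/n}}(b)$), take upper bounds from the inclusion of $\DI_\eta(b)$ into the classical Dirichlet-improvable set together with \cite[Theorems~1.1 and~1.3]{CC16}, and obtain lower bounds by running the Cheung--Chevallier tree with an added arithmetic constraint on the heights. One correction on the quantitative side: the exponent $\tfrac12$ in the upper bound is not a feature of the transference alone --- the transference contributes the exponent $m/n=1/n$, and the remaining factor comes from the Cheung--Chevallier upper bound for $\dim_H\DI_\eta$.

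The genuine gap is in your lower bound for irrational $b$. In the Cheung--Chevallier tree the children of a vertex $x=(r,s)$ along a fixed direction $\alpha$ have heights lying in the single arithmetic progression $\{q_0+\ell s\}_\ell$ (see the description of $\zeta(x,\alpha,\eps)$), so the sequence you must analyse modulo~$1$ is $bq_0+\ell\cdot bs$. Its distribution is governed by the Diophantine type of $bs$, which changes from level to level and is completely uncontrolled; neither equidistribution of $\{by\}_{y\in\bZ}$ nor the three-distance theorem helps, since you are not sampling all integers but only one residue class modulo $s$. A fixed threshold $|bq|_\bZ\ge c$ can then kill the entire progression at a given scale: if $bs$ happens to be extremely close to $\tfrac12$ and $bq_0$ close to $\tfrac14$, one has $|b(q_0+\ell s)|_\bZ\approx\tfrac14$ for every $\ell$ in the available range. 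The paper's remedy is to let the threshold \emph{decay} along the tree, asking only $|bq|_\bZ>\eta\,2^{-(i+1)}$ at depth $i$. Then whenever the parent satisfies $|bs|_\bZ>\eta\,2^{-i}$ and some $\ell$ is bad, the triangle inequality forces $\ell\pm1$ to be good, so at least half the children survive with no equidistribution input whatsoever (Proposition~\ref{Prop_lam_est}). The decaying threshold still lands inside $\Sing(b)$ because $2^{-i}$ decays more slowly than the approximation parameter $\eps_i$; and this is precisely why the quantitative lower bound for $\DI_\eps(b)$ in Theorem~\ref{Thm_Sec3} genuinely requires $b\in\bQ$, where a fixed threshold $1/d$ is available via~\eqref{Eq_b_prop}.
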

It is worth noting that Theorem \ref{Thm_2} is somewhat similar to \cite[Theorems 1.1 and 1.3]{CC16}. Indeed, our method relies on their method. This will be discussed at the end of the introduction.

In the view of Theorem \ref{Thm_2}, we may expect 
\begin{conj}\label{Conj_1} 
When $(m,n)\neq(1,1)$, for any $\mb{b}\in\bR^m\setminus\bZ^m$,
\[
\dim_H \Bad^{\mb{b}}(\infty) = mn-\frac{mn}{m+n}  \ \text{ and }\   \dim_H \Bad(\infty) = mn+m-\frac{mn}{m+n}. 
\]
\end{conj}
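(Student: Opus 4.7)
The upper bounds are essentially known: by Theorem \ref{Thm_1} and Remark \ref{Rem_1}, $\Bad^{\mb{b}}(\infty)\subseteq\Sing$, and the Cheung--Chevallier theorem \cite{CC16}, refined in \cite{KKLM17,DFSU}, gives $\dim_H\Sing = mn - mn/(m+n)$. The bound on $\dim_H\Bad(\infty)$ then follows from the inclusion $\Bad(\infty)\subseteq\Sing\times\bR^m$ together with the product dimension inequality, giving $\dim_H\Bad(\infty)\leq mn+m-mn/(m+n)$. So the substance of the conjecture is the matching lower bound on $\dim_H\Bad^{\mb{b}}(\infty)$; the lower bound for $\dim_H\Bad(\infty)$ will then follow by Marstrand-type slicing once the construction is shown to depend measurably on $\mb{b}$.

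The plan is to adapt the best-approximation tree construction of \cite{CC16} (used in Theorem \ref{Thm_2} for $m=1$) to general $(m,n)\neq(1,1)$. The idea is to build, inductively in $k$, a self-similar Cantor family of matrices $^tA\in\Sing(\mb{b})$. At each level one selects a primitive vector $\mb{y}_k\in\bZ^m$ playing the role of a best approximation vector. In addition to the standard growth hypotheses of \cite{CC16} on $\|\mb{y}_k\|$ and $\|{^tA}\mb{y}_k\|_\bZ$, we impose the new constraint $|\mb{b}\cdot\mb{y}_k|_\bZ \geq \|\mb{y}_k\|^{-\delta}$ for a small fixed $\delta>0$. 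If such $\mb{y}_k$ exist in sufficient abundance at every scale and every direction, the singularity-for-$\mb{b}$ inequality \eqref{Eq_NewSing} is forced because the correction factor $|\mb{b}\cdot\mb{y}_k|_\bZ$ is subpolynomial in $X_k$, while the mass-distribution computation proceeds as in \cite{CC16} and yields $\dim_H\Bad^{\mb{b}}(\infty)\geq mn-mn/(m+n)$. The key intermediate steps are: (i) reduce the problem to $\dim_H\Sing(\mb{b})$ via Theorem \ref{Thm_1}; (ii) isolate the new inequality $|\mb{b}\cdot\mb{y}|_\bZ\geq\|\mb{y}\|^{-\delta}$ as the only extra branching restriction; (iii) recycle the \cite{CC16} counting of admissible $^tA$ with these restricted best approximations; (iv) apply the mass distribution principle to the resulting Cantor set.

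The main obstacle I expect is step (ii), the existence of sufficiently many admissible $\mb{y}_k$. One needs a quantitative inhomogeneous equidistribution statement: for every dyadic annulus $N\leq\|\mb{y}\|<2N$ in $\bZ^m$ and every relevant cone of directions, a positive proportion of vectors $\mb{y}$ satisfy $|\mb{b}\cdot\mb{y}|_\bZ\geq N^{-\delta}$. For $\mb{b}\in\bR^m\setminus\bQ^m$ this should follow from Weyl-type equidistribution of $\{\mb{b}\cdot\mb{y} \bmod 1\}$ combined with lattice-point counting. For $\mb{b}\in\bQ^m\setminus\bZ^m$ one must avoid an explicit sublattice, namely the union of parallel rational hyperplanes on which $\mb{b}\cdot\mb{y}\in\bZ$, and then verify that this removal is compatible with the geometric spread condition of \cite{CC16} needed to keep the tree branching maximal at every level. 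Ensuring compatibility of the metric constraint $|\mb{b}\cdot\mb{y}|_\bZ\geq\|\mb{y}\|^{-\delta}$ with the spread/direction constraint governing the CC16 construction is the essential new technical ingredient; once this is handled, the dimension computation is a routine modification of \cite{CC16}.
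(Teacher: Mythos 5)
This statement is Conjecture \ref{Conj_1}; the paper does not prove it, and in fact explicitly identifies the obstacle to proving it. Your proposal does not overcome that obstacle — it assumes it away.

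The upper bound paragraph is fine and matches the paper's discussion: $\Bad^{\mb{b}}(\infty)\subset\Sing$ via Remark~\ref{Rem_1}, and the dimension of $\Sing$ is known from \cite{KKLM17} (upper bound) and \cite{DFSU} (lower bound). The problem is the lower bound for $\Bad^{\mb{b}}(\infty)$. You propose to ``adapt the best-approximation tree construction of \cite{CC16} \ldots\ to general $(m,n)\neq(1,1)$,'' and then to ``recycle the \cite{CC16} counting of admissible $^tA$.'' But the Cheung--Chevallier fractal construction is fundamentally a construction for \emph{singular vectors}, i.e.\ $m=1$ (equivalently $n=1$ by transposition). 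It rests on Farey lattices $\Lam_x$, a canonical best-approximation sequence, and a self-similar tree of balls $B(\wh{x},\lam_1(x)/2|x|)$ — objects whose very definitions use that the target is a single point of $\bR^n$. For general $m,n\geq 2$ there is no known analogue of this tree; the exact dimension of $\Sing$ was obtained in \cite{DFSU} by the variational principle in the parametric geometry of numbers, a method that produces a dimension value but no explicit Cantor-type fractal to graft a side condition like $|\mb{b}\cdot\mb{y}_k|_\bZ\geq\|\mb{y}_k\|^{-\delta}$ onto. The paper says precisely this: ``the dimension of the singular matrices was calculated in \cite{DFSU}, but the fractal structure remains unclear \ldots\ If one can reveal the fractal structure in \cite{DFSU}, it seems plausible to modify the structure as above to obtain Conjectures \ref{Conj_1} and \ref{Conj_2}.'' Steps (ii)--(iv) of your plan therefore have no existing base construction to modify; the ``routine modification'' does not exist because the thing being modified does not exist for general $(m,n)$. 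Your paragraph on equidistribution of $\{\mb{b}\cdot\mb{y}\bmod 1\}$ addresses only what the genuinely new ingredient would be \emph{if} the tree existed, which is not where the gap lies.

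A secondary issue: the reduction from $\dim_H\Bad^{\mb{b}}(\infty)$ to $\dim_H\Bad(\infty)$ ``by Marstrand-type slicing'' is not correct as stated. Marstrand/Wegmann slicing gives an \emph{upper} bound on the dimension of typical slices in terms of the dimension of the whole set, not a lower bound on the whole set from the dimension of every slice; there is no general inequality of the form $\dim_H E\geq \operatorname{ess\,inf}_{\mb{b}}\dim_H E_{\mb{b}}+m$. In the paper this is handled (for $m=1$) not by slicing but via the dynamical picture (Theorem~\ref{Thm_4}), using the product structure of the nonexpanding horospherical subgroup $\wt{H}$ with the expanding one $H$, together with the invariance in Proposition~\ref{Prop_inv}. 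If you do eventually construct the Cantor set in $\Sing(\mb{b})$, the correct route to the $\Bad(\infty)$ lower bound is either to make the construction locally constant in $\mb{b}$ (so a genuine product set sits inside $\Bad(\infty)$) or to go through the homogeneous-dynamics reformulation \eqref{Eq_char} as in Theorem~\ref{Thm_4}, not through slicing.
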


Now we focus on the set $\Bad_A(\infty)$. Following \cite{DFSU}, we say that $A\in M_{m,n}(\bR)$ is \textit{very singular} if $\wh{w}(A)>\frac{n}{m}$. It can be easily checked that $A$ is very singular if and only if $^tA$ is very singular (see Proposition \ref{Prop_transf}). Following the discussion in Subsection \ref{subsec_1.1}, \cite[Theorem]{BL05} implies that if $A$ is very singular, then $\Bad_A(\infty)$ is of full Lebesgue measure. The following theorem says that the complement of $\Bad_A(\infty)$ should be small with respect to $\wh{w}({^tA})$ in terms of Hausdorff dimension.
\begin{thm}\label{Thm_3}
For any $A\in M_{m,n}(\bR)$,
\[
\dim_H (\bR^m\setminus \Bad_A(\infty)) \leq m-\frac{\wh{w}({^t}A)-\frac{m}{n}}{\wh{w}({^t}A)+1}.
\]
In particular, if $A$ is very singular, or equivalently, $^tA$ is very singular, then the complement of $\Bad_A(\infty)$ cannot have full Hausdorff dimension.
\end{thm}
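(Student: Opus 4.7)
The plan is to write
\[
\bR^m\setminus\Bad_A(\infty)=\bigcup_{C\in\bN}\cL_A(C), \qquad \cL_A(C):=\bigl\{\mb{b}\in\bR^m:\|A\mb{q}-\mb{b}\|_\bZ\leq C\|\mb{q}\|^{-n/m}\text{ for infinitely many }\mb{q}\in\bZ^n\bigr\},
\]
and to estimate $\dim_H\cL_A(C)$ uniformly in $C$ by a scale-by-scale covering argument driven by the dual approximation rate $w:=\wh{w}({}^tA)$.

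Fix $C$ and a small $\eta\in(0,w)$ that will eventually be sent to $0$. For $\mb{y}\in\bZ^m$ and $\mb{q}\in\bZ^n$, the identity $\mb{y}\cdot(A\mb{q}-\mb{b})=({}^tA\mb{y})\cdot\mb{q}-\mb{y}\cdot\mb{b}$, together with rounding to nearest integer vectors, yields the transference estimate
\[
|\mb{y}\cdot\mb{b}|_\bZ\;\leq\;n\|\mb{q}\|\cdot\|{}^tA\mb{y}\|_\bZ+m\|\mb{y}\|\cdot\|A\mb{q}-\mb{b}\|_\bZ.
\]
For each large dyadic scale $Q=2^k$, use the definition of $\wh{w}$ to select $\mb{y}_k\in\bZ^m$ with $Y_k:=\|\mb{y}_k\|\leq X_k$ and $\|{}^tA\mb{y}_k\|_\bZ\leq X_k^{-w+\eta}$, where $X_k$ is chosen so that the two contributions on the right balance, namely $X_k\asymp Q^{(m+n)/(m(1+w-\eta))}$. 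Then any $\mb{b}$ for which some $\mb{q}$ with $\|\mb{q}\|\in[Q,2Q)$ satisfies $\|A\mb{q}-\mb{b}\|_\bZ\leq C\|\mb{q}\|^{-n/m}$ must obey $|\mb{y}_k\cdot\mb{b}|_\bZ\leq\beta_k:=O\bigl(Q^{(m+n)/(m(1+w-\eta))-n/m}\bigr)$. Crucially, $\mb{y}_k$ depends only on the scale $k$, not on $\mb{b}$ or $\mb{q}$, so the whole union $\bigcup_{\|\mb{q}\|\in[Q,2Q)}\{\mb{b}:\|A\mb{q}-\mb{b}\|_\bZ\leq CQ^{-n/m}\}$ is contained in the slab $T_k:=\{\mb{b}\in[0,1]^m:|\mb{y}_k\cdot\mb{b}|_\bZ\leq\beta_k\}$.

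To turn this into a dimension bound, note that $T_k$ consists of $O(Y_k)$ parallel strips of thickness $\sim\beta_k/Y_k$, and a direct count shows it is covered by $N_k=O\bigl(\max(Y_k r_k^{1-m},\,\beta_k r_k^{-m})\bigr)$ cubes of side $r_k:=CQ^{-n/m}$; our choice of $X_k$ gives $\beta_k\asymp X_k r_k$, so both expressions reduce to $O(X_k r_k^{1-m})$ there. Since every $\mb{b}\in\cL_A(C)$ is captured at infinitely many scales, the family $\bigcup_{k\geq K}\{\text{these }N_k\text{ cubes}\}$ is a valid cover for every $K$, and the $s$-Hausdorff content of $\cL_A(C)$ is controlled by $\sum_{k\geq K}N_k r_k^s$. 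This is geometric in $k$ with common ratio $2^{\chi}$ for $\chi=\tfrac{m+n}{m(1+w-\eta)}-\tfrac{(s-m+1)n}{m}$; the condition $\chi<0$ rearranges to $s>m-1+\tfrac{m+n}{n(1+w-\eta)}$, and a short algebraic simplification rewrites this threshold as $m-\tfrac{(w-\eta)-m/n}{(w-\eta)+1}$. Letting $\eta\to 0$ and taking the countable union over $C\in\bN$ gives $\dim_H(\bR^m\setminus\Bad_A(\infty))\leq m-\tfrac{w-m/n}{w+1}$, as required. The main technical point is that the cover must be uniform across $\mb{b}\in\cL_A(C)$, which is built into the construction because $\mb{y}_k$ depends only on the scale; the remaining case analysis (in particular when $Y_k\ll X_k$, so that $T_k$ has a few wide strips instead of many thin ones) produces exactly the same critical exponent, and is just bookkeeping.
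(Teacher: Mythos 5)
Your proof is correct and genuinely different from the paper's. The paper's argument imports Proposition~5.1 of [Kim] as a black box: it fixes once and for all the sequence of \emph{best approximations} $(\mb{y}_k)$ to ${}^tA$, exhibits an explicit fractal $B_\alpha(\{\gamma_k\})$ built from that sequence inside $\Bad_A\bigl(\tfrac{\alpha-n}{m}\bigr)$, and then bounds $\dim_H\bigl([0,1]^m\setminus B_\alpha\bigr)$ by covering the limsup of slabs $\{|\mb{b}\cdot\mb{y}_k|_\bZ\leq\alpha\gamma_k\}$ via the Hausdorff--Cantelli lemma (it also handles the non-maximal-rank case $\wh w({}^tA)=\infty$ separately, where a single rational direction directly gives dimension $\leq m-1$). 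You instead bypass best approximations entirely: you use the elementary transference identity $|\mb{y}\cdot\mb{b}|_\bZ\leq n\|\mb{q}\|\,\|{}^tA\mb{y}\|_\bZ+m\|\mb{y}\|\,\|A\mb{q}-\mb{b}\|_\bZ$, choose a fresh witness $\mb{y}_k$ at each dyadic scale $Q=2^k$ directly from the definition of $\wh w$ (with the balancing choice $X_k\asymp Q^{(m+n)/(m(1+w-\eta))}$), and read off the slab cover and the Hausdorff-content estimate from first principles. The two routes land on the same critical exponent $m-1+\tfrac{m+n}{n(1+w)}=m-\tfrac{w-m/n}{w+1}$, and your approach is more self-contained; the paper's approach, by going through best approximations and the asymmetric quantity $\gamma_k$, records more refined scale-by-scale information that is not needed for the upper bound here but is the natural object when you also want a matching \emph{lower} bound or a finer description of the exceptional set. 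One small thing to make explicit in a final write-up: you need $\eta$ small enough that $w-\eta>m/n$ so that $\beta_k<1$ and the slabs $T_k$ are proper subsets of $[0,1]^m$; when $\wh w({}^tA)=m/n$ the claimed bound is $m$ and hence vacuous, so this costs nothing.
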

\begin{rem}\
\begin{enumerate}
\item In the view of \cite{ET11, Kim} and Theorem \ref{Thm_3}, the more singular $A$ is, the larger $\Bad_A(\infty)$ becomes.
\item It seems interesting to give a lower bound for the Hausdorff dimension of the complement of $\Bad_A(\infty)$ or calculate its dimension precisely.
\item It was proved in \cite{BKLR, KKL} that $A$ is \textit{singular on average} if and only if there exists $\eps>0$ such that 
$\Bad_A(\eps)$ has full Hausdorff dimension. 
Thus it seems interesting to reveal a certain equivalence between a Diophantine property of $A$ and a metrical property of $\Bad_A(\infty)$.
\end{enumerate}
\end{rem}

\subsection{Dynamical discussion}\label{subsec_1.3}
In this subsection, we will discuss the dynamical interpretation of infinitely badly approximability following classical relations between homogeneous dynamics and Diophantine approximation.
 
Consider the homogeneous space $\cX_0=\SL_{m+n}(\bR)/\SL_{m+n}(\bZ)$, which can be identified with the space of unimodular lattices in $\bR^{m+n}$. 
For any $t\in\bR$ and $A\in M_{m,n}(\bR)$, let us denote
\[
a_t = \begin{pmatrix} e^{t/m}I_m & \\ & e^{-t/n}I_n \end{pmatrix}\quad\text{and}\quad 
u_A= \begin{pmatrix} I_m & A \\ & I_n \end{pmatrix}.
\]
Define the function $\Del_0:\cX_0\to\bR_{>0}$ by $\Del_0(x)=\min_{v\in x\setminus\{0\}}\|v\|$. 
Dani's correspondence \cite[Theorem 2.14]{D85} says that 
\[
A \text{ is singular} \iff \Del_0(a_t u_A\bZ^{m+n}) \to 0 \text{ as } t\to\infty.
\]
Let $\ASL_{m+n}(\bR)=\SL_{m+n}(\bR)\ltimes\bR^{m+n}$ and $\ASL_{m+n}(\bZ)=\SL_{m+n}(\bZ)\ltimes\bZ^{m+n}$. The homogeneous space $\cX=\ASL_{m+n}(\bR)/\ASL_{m+n}(\bZ)$ can be identified with the space of unimodular grids in $\bR^{m+n}$, i.e. affine shifts of unimodular lattices in $\bR^{m+n}$. Define the function $\Del : \cX \to \bR_{\geq 0}$ by $\Del(x) = \min_{v\in x}\|v\|$.
For $A\in M_{m,n}(\bR)$ and $\mb{b}\in \bR^m$, consider the grid 
\[
\Lam_{A,\mb{b}}= u_A \bZ^{m+n}+ \begin{psmallmatrix} \mb{b} \\ \mb{0} \end{psmallmatrix} \in \cX.
\]
The diagonal matrix $a_t$, viewed as a linear transformation on $\bR^{m+n}$, induces a natural action on $\cX$.

Following \cite[Subsection 1.3]{Kle99}, we say that $(A,\mb{b})\in M_{m,n}(\bR)\times\bR^m$ is \textit{rational} if $\|A\mb{q}-\mb{b}\|_\bZ = 0$ for some $\mb{q}\in\bZ^n$, and \textit{irrational} otherwise.  
Using the proof of \cite[Theorem 4.4]{Kle99}, we have the following characterization:
\eq{
(A,\mb{b})\in \Bad(\infty)\text{ and irrational} \iff \Del(a_t \Lam_{A,\mb{b}}) \to \infty \text{ as } t\to\infty.
}
As stated in \cite[Subsection 1.3]{Kle99}, if $(A,\mb{b})$ is rational, then the badly approximability of $(A,\mb{b})$ is the same as that of $A$. Since there is no infinitely badly approximable linear form, we indeed have 
\eqlabel{Eq_char}{
(A,\mb{b})\in \Bad(\infty) \iff \Del(a_t \Lam_{A,\mb{b}}) \to \infty \text{ as } t\to\infty.
}
Any lattice can be seen as a grid containing the origin, so we may consider $\cX_0$ as a subset of $\cX$. In particular, $\Del^{-1}(0) = \cX_0$. Therefore, the dynamical property that $\Del(a_t x) \to\infty$ as $t\to\infty$ means that the orbit $a_t x$ moves away from $\cX_0$ as $t\to\infty$.

The dynamical interpretation of Remark \ref{Rem_1}, which is weaker than Theorem \ref{Thm_1}, is the following (see Proposition \ref{Prop_transf}):
\eq{
\Del(a_t x)\to\infty \text{ as } t\to\infty \implies \Del_0(a_t\pi(x))\to 0 \text{ as } t\to\infty,
}
where $\pi:\cX \to \cX_0$ denotes the natural projection sending a grid $x = x_0 + v$ to its underlying lattice $x_0$; for this projection, see e.g. \cite[Subsection 2.4]{MRS}.
Using this implication, it follows from \cite[Theorem 1.1]{KKLM17} that 
\eqlabel{Eq_upper}{ \dim_H \{x \in \cX : \Del(a_tx)\to \infty \text{ as }t\to\infty \} \leq \dim_H \cX - \frac{mn}{m+n}.}
As an application of Theorem \ref{Thm_2} and this upper bound, we have
\begin{thm}\label{Thm_4}
When $m=1$ and $n\geq 2$, 
\[ \dim_H \{x \in \cX : \Del(a_tx)\to \infty \text{ as }t\to\infty \} = \dim_H \cX - \frac{n}{n+1}.\]
\end{thm}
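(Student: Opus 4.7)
The upper bound $\dim_H S \le \dim_H\cX - \tfrac{n}{n+1}$ for $S:=\{x\in\cX:\Del(a_tx)\to\infty\}$ is the $m=1$ specialization of \eqref{Eq_upper}, so the task is to produce the matching lower bound. The strategy is to push the set $\Bad(\infty)$, whose Hausdorff dimension is known from Theorem~\ref{Thm_2}, into $S$ via \eqref{Eq_char} and then thicken it by the stable horospherical subgroup and the centralizer of $\{a_t\}$, exploiting the local product structure of $\ASL_{n+1}(\bR)$.

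Define $\phi(A,\mb{b}):=\Lam_{A,\mb{b}}$; by \eqref{Eq_char}, $\phi(\Bad(\infty))\subseteq S$. This $\phi$ is the orbit map at $\bZ^{n+1}$ for the unstable horospherical subgroup
\[ U^+ \;=\; \{u_A\tau_{\mb{b}} : A\in M_{1,n}(\bR),\,\mb{b}\in\bR\} \;\cong\; \bR^{n+1}, \]
where $\tau_{\mb{b}}$ denotes translation by $\binom{\mb{b}}{\mb{0}}$. Since $U^+\cap\ASL_{n+1}(\bZ)\cong\bZ^{n+1}$ is a lattice in $U^+$, the map $\phi$ is locally bi-Lipschitz, so Theorem~\ref{Thm_2} gives $\dim_H\phi(\Bad(\infty))=\tfrac{n^2}{n+1}+1$. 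Let $U^-$ be the stable horospherical subgroup (lower-triangular unipotents in $\SL_{n+1}(\bR)$ together with translations along the last $n$ coordinates; $\dim U^-=2n$) and $C$ the centralizer of $\{a_t\}$ in $\ASL_{n+1}(\bR)$ (the block-diagonal subgroup of $\SL_{n+1}(\bR)$; $\dim C=n^2$). Using $a_tu^-a_{-t}\to e$ for $u^-\in U^-$, the commutation $a_tc=ca_t$ for $c\in C$, and the elementary bound $\Del(gy)\ge\|g^{-1}\|^{-1}\Del(y)$, I will verify that $U^-\cdot C\cdot S\subseteq S$; in particular, $U^-\cdot C\cdot\phi(\Bad(\infty))\subseteq S$.

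The concluding step relies on the local product structure: the product map $U^-\times C\times U^+\to\ASL_{n+1}(\bR)$ is a local diffeomorphism at the identity (the three Lie subalgebras span $\mathfrak{asl}_{n+1}$ complementarily, with total dimension $2n+n^2+(n+1)=n^2+3n+1=\dim\ASL_{n+1}$), and composing with the covering $\ASL_{n+1}(\bR)\to\cX$ gives, near each basepoint, a bi-Lipschitz chart $(u^-,c,u^+)\mapsto u^-cu^+\cdot e_0$. A standard covering argument (if every point of a set admits a neighborhood of strictly smaller Hausdorff dimension, the whole set does too, via Lindel\"of) produces, for each $\eps>0$, a point $e_0\in\phi(\Bad(\infty))$ such that every sufficiently small neighborhood of $e_0$ meets $\phi(\Bad(\infty))$ in a set of Hausdorff dimension at least $\tfrac{n^2}{n+1}+1-\eps$. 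Combining this with the product inequality $\dim_H(A\times B\times E)\ge\dim_H A+\dim_H B+\dim_H E$ and the bi-Lipschitz invariance of Hausdorff dimension yields
\[
\dim_H S \;\ge\; 2n+n^2+\tfrac{n^2}{n+1}+1-\eps \;=\; n^2+3n+1-\tfrac{n}{n+1}-\eps,
\]
and letting $\eps\to 0$ matches the upper bound, since $\dim_H\cX=(n+1)^2+n=n^2+3n+1$ and $\tfrac{n^2}{n+1}=n-\tfrac{n}{n+1}$.

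The main technical subtlety is aligning the local dimension argument with the product chart: one must choose a basepoint $e_0$ at which the local Hausdorff dimension of $\phi(\Bad(\infty))$ coincides (up to $\eps$) with its global value and simultaneously ensure the product chart is bi-Lipschitz on a neighborhood of that basepoint with uniformly controlled constants. Compatibility of these two choices is the only obstacle; the invariance check for $U^-\cdot C$ and the dimension arithmetic are routine.
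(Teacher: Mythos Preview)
Your proposal is correct and follows essentially the same route as the paper: invoke \eqref{Eq_upper} for the upper bound, then for the lower bound thicken $\phi(\Bad(\infty))$ by the nonexpanding directions using the local product structure of $\ASL_{n+1}(\bR)$ together with the invariance of the divergence condition. The paper packages your $U^-\cdot C$ as a single subgroup $\wt{H}$ (the weak-stable, i.e.\ nonexpanding, horospherical subgroup) and proves the invariance in one stroke via the conjugation estimate $\|a_t p a_{-t}\|\asymp 1$ for $p\in\wt{H}$, rather than treating $U^-$ and $C$ separately; otherwise the argument is the same, and the paper is simply terser about the product-dimension and local-basepoint issues you spell out. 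Your worry about ``compatibility'' of the basepoint with the chart is unfounded: the product map is a local diffeomorphism at every point, so once Lindel\"of hands you $e_0$, the bi-Lipschitz chart comes for free.
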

Similar to Conjecture \ref{Conj_1}, we may expect
\begin{conj}\label{Conj_2}
When $(m,n)\neq (1,1)$,
\[ \dim_H \{x \in \cX : \Del(a_tx)\to \infty \text{ as }t\to\infty \} = \dim_H \cX - \frac{mn}{m+n}.\]
\end{conj}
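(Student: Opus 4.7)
The upper bound is already supplied by \eqref{Eq_upper}, so only the matching lower bound
\[
\dim_H \cD \geq \dim_H \cX - \tfrac{mn}{m+n}, \qquad \cD := \{x \in \cX : \Del(a_t x) \to \infty \text{ as } t \to \infty\},
\]
remains. The plan is to reduce to Conjecture \ref{Conj_1} via the characterization \eqref{Eq_char}, then enlarge the embedded family $\{\Lam_{A,\mb{b}} : (A, \mb{b}) \in \Bad(\infty)\}$ by acting on the left by the centralizer and the contracting horospherical subgroup of $a_t$, and by translating by vectors in the contracting subspace of $\bR^{m+n}$. These neutral-and-contracting directions contribute exactly $(m^2+n^2-1) + mn + n$ independent parameters that preserve divergence; together with the $mn + m$ expanding parameters $(A, \mb{b})$ they fill out the whole tangent space of $\cX$, so Conjecture \ref{Conj_1}'s lower bound on $\dim_H \Bad(\infty)$ plus the extra dimensions will total precisely $\dim_H \cX - \frac{mn}{m+n}$.

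Concretely, set $G_c = \{\diag(G_1, G_2) \in \SL_{m+n}(\bR)\}$ (dimension $m^2 + n^2 - 1$), $L = \{l_B := \smallmat{I_m & 0 \\ B & I_n} : B \in M_{n,m}(\bR)\}$ (dimension $mn$), and $V = \{(0, w) : w \in \bR^n\}$ (dimension $n$), and define
\[
\Phi : \Bad(\infty) \times G_c \times L \times V \longrightarrow \cX, \qquad (A, \mb{b}, g, B, w) \longmapsto g \cdot l_B \cdot \Lam_{A, \mb{b}} + (0, w).
\]
Varying $A, \mb{b}, g, B, w$ moves $\Phi$ along the five pieces of the $a_t$-eigenspace decomposition of the tangent space of $\ASL_{m+n}(\bR)$ (upper-unipotent, first-$m$-coordinate translation, block-diagonal, lower-unipotent, last-$n$-coordinate translation), which are in direct sum, so $\Phi$ is locally a bi-Lipschitz diffeomorphism onto its image. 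For fixed $(g, B, w)$ and $(A, \mb{b}) \in \Bad(\infty)$, using $a_t g = g a_t$, $a_t l_B a_t^{-1} \to I_{m+n}$, and $\|a_t(0, w)\| = e^{-t/n}\|w\| \to 0$, the elementary inequality $\Del(h y + z) \geq \|h^{-1}\|^{-1}\, \Del(y) - \|z\|$ for $h \in \SL_{m+n}(\bR)$ yields
\[
\Del\big(a_t \Phi(A, \mb{b}, g, B, w)\big) \geq \bignorm{g^{-1}(a_t l_B a_t^{-1})^{-1}}^{-1} \Del(a_t \Lam_{A, \mb{b}}) - e^{-t/n}\|w\| \longrightarrow \infty
\]
by \eqref{Eq_char}, so the image of $\Phi$ lies in $\cD$. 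Combining the local bi-Lipschitz property of $\Phi$ with the universal inequality $\dim_H (X_1 \times \cdots \times X_r) \geq \sum_i \dim_H X_i$ and Conjecture \ref{Conj_1} gives
\[
\dim_H \cD \;\geq\; \big(mn + m - \tfrac{mn}{m+n}\big) + (m^2 + n^2 - 1) + mn + n \;=\; \dim_H \cX - \tfrac{mn}{m+n}.
\]
(Applied with Theorem \ref{Thm_2} in place of Conjecture \ref{Conj_1}, the same scheme recovers Theorem \ref{Thm_4} unconditionally when $m = 1$.)

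The main obstacle is therefore Conjecture \ref{Conj_1}: the sharp lower bound $\dim_H \Bad(\infty) \geq mn + m - \frac{mn}{m+n}$ for general $(m, n) \neq (1, 1)$. By Theorem \ref{Thm_1} this reduces to computing the Hausdorff dimension of $\Sing(\mb{b})$ for $\mb{b} \in \bR^m \setminus \bZ^m$, and the natural route is to push the Cantor-type construction from \cite{CC16} (used in the proof of Theorem \ref{Thm_2}) to the weighted singularity condition \eqref{Eq_NewSing}. The serious complication when $m \geq 2$ is that the normalization by $|\mb{b} \cdot \mb{y}|_\bZ$ couples the approximation scale $X$ to the size of $\mb{y}$ in an inhomogeneous way, so the parametric-geometry-of-numbers bookkeeping of best approximation vectors must be redone jointly in $m \geq 2$ expanding directions; a new branching argument for the Cantor-tree construction appears unavoidable. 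Without Conjecture \ref{Conj_1}, the same $\Phi$-scheme together with the elementary bound $\dim_H \Bad(\infty) \geq n(m-1) + m$ gives only $\dim_H \cD \geq \dim_H \cX - \frac{n^2}{m+n}$, short of the conjectural value.
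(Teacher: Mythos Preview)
The statement in question is Conjecture~\ref{Conj_2}, which the paper explicitly leaves open; there is no proof in the paper to compare against. What you have written is not a proof of the conjecture but a correct reduction of Conjecture~\ref{Conj_2} to Conjecture~\ref{Conj_1}, together with an honest acknowledgment that Conjecture~\ref{Conj_1} is the real obstacle. On that point you and the paper agree.

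Your reduction is essentially the same argument the paper uses to derive the special case Theorem~\ref{Thm_4} from Theorem~\ref{Thm_2}. Your subgroup $G_c \cdot L$ together with the translations $V$ is precisely the nonexpanding horospherical subgroup $\wt{H}$ of the paper (in your notation $g\,l_B = \smallmat{G_1 & 0 \\ G_2 B & G_2}$, which runs over all of the linear part of $\wt{H}$), and your inequality $\Del(hy+z)\ge \|h^{-1}\|^{-1}\Del(y)-\|z\|$ is exactly the content of Proposition~\ref{Prop_inv}. The local diffeomorphism you invoke is the paper's product map $\wt{H}\times H\to \ASL_{m+n}(\bR)$. So the conditional part of your proposal matches the paper's method and is correct.

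The genuine gap, as you yourself flag, is Conjecture~\ref{Conj_1}: the lower bound $\dim_H \Bad(\infty)\ge mn+m-\frac{mn}{m+n}$ for general $(m,n)$. Neither you nor the paper supplies this. Your closing paragraph correctly locates the difficulty in adapting the fractal construction of \cite{CC16} to the weighted singularity condition~\eqref{Eq_NewSing} when $m\ge 2$; the paper's own discussion at the end of the introduction makes the same point, suggesting that uncovering the fractal structure underlying \cite{DFSU} may be the route forward.
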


\subsection{Discussion of the proof of Theorem \ref{Thm_2}}
In this paper, although Theorem \ref{Thm_1} is philosophically significant, the most technically important part is Theorem \ref{Thm_2}. Thus, we will discuss the proof of Theorem \ref{Thm_2}.

Thanks to Theorem \ref{Thm_1}, it is enough to estimate the Hausdorff dimension of $\Sing(\mb{b})$. 
For this, we will modify the method of \cite{CC16} as follows.
Comparing the new singular definition \eqref{Eq_NewSing} with the classical one \eqref{Eq_OldSing}, we observe a difference of $|\mb{b}\cdot\mb{y}|_\bZ$-term. Thus, if we consider the set of $A$ in $M_{m,n}(\bR)$ such that for any $\eps>0$ for all sufficiently large $X$ the inequalities
\eqlabel{Eq_Idea}{
\|^tA\mb{y}\|_\bZ<\eps \del X^{-\frac{m}{n}},\quad \|\mb{y}\|<\del X,\quad\text{and} \quad
|\mb{b}\cdot\mb{y}|_\bZ>\del
} have an integer solution $\mb{y}\in\bZ^m$, then this set is contained in $\Sing(\mb{b})$. We can also consider this set as the set of singular matrices with solutions restricted on the set $\{\mb{y}\in\bZ^m : |\mb{b}\cdot\mb{y}|_\bZ >\del\}$. Therefore, we will modify the fractal structure developed in \cite{CC16} so that the modified structure is contained in the singular set with restricted solutions, and show that this structure has the same dimension as the original structure. 

Now let us discuss why we need the condition $b\in \bQ\setminus\bZ$ in the second argument of Theorem \ref{Thm_2}. In order to bound the Hausdorff dimension of $\Bad^b(1/\eps)$, we need to bound the Hausdorff dimension of the set of $\eps$-Dirichlet improvable vectors for $b$, which will be defined in Section \ref{sec2}. For this, $\del$ in \eqref{Eq_Idea} should be fixed and it is the main difference from the case $\Sing(b)$. If $b=c/d$ with $\gcd(c,d)=1$ and $d\geq 2$, then $|bk|_\bZ<1/d$ if and only if $|bk|_\bZ=0$ for any $k\in\bZ$. This property eventually enable us to bound the Hausdorff dimension of $\Bad^b(1/\eps)$ even if $\del$ in \eqref{Eq_Idea} is fixed. 

A similar idea to the above was used significantly in \cite{KK22}. Of course, the dimension of the singular matrices was calculated in \cite{DFSU}, but the fractal structure remains unclear since they used the variational principle in the parametric geometry of numbers. On the other hand, the fractal structure was clearly given in \cite{CC16} so that we can modify it.  
If one can reveal the fractal structure in \cite{DFSU}, it seems plausible to modify the structure as above to obtain Conjectures \ref{Conj_1} and \ref{Conj_2}.

\vspace{3mm}
\textit{Structure of the paper.} 
In Section \ref{sec2}, we recall the transference principle from \cite{Cas57} and establish Theorem \ref{Thm_1}.
Section \ref{sec3:pre} provides the necessary preliminaries for Section \ref{sec3}.
In Section \ref{sec3}, we refine the fractal structures of \cite{CC16} in order to prove Theorem \ref{Thm_2}.
Section \ref{sec4} is devoted to the proof of Theorem \ref{Thm_3}.
Finally, Section \ref{sec5} discusses the dynamical aspects, including the proof of Theorem \ref{Thm_4}.

\vspace{3mm}
\textit{Convention.} In what follows, the notation $A \ll B$ means that there exists a constant $C$ (the \textit{implied constant}) such that $A \leq CB$. The notation $A \asymp B$ means $A \ll B\ll A$.

\vspace{3mm}
\tb{Acknowledgments}. 
I thank Elon Lindenstrauss for his valuable comments and for suggesting Theorem \ref{Thm_3}, Yitwah Cheung for generously sharing the idea of \cite{CC16}, and the anonymous referees for their careful reading of this article and helpful comments.

\section{Transference Principle}\label{sec2}
In this section, we will prove a certain stronger transference argument than Theorem \ref{Thm_1}.
For this, we introduce the following definition:
For given $\eps>0$ and $\mb{b}\in\bR^m$, we say that $^tA \in M_{n,m}(\bR)$ is \textit{$\eps$-Dirichlet improvable for $\mb{b}$} if for all sufficiently large $X$ the inequalities
\[
\|{^{t}A}\mb{y}\|_\bZ < \eps |\mb{b}\cdot \mb{y}|_\bZ X^{-\frac{m}{n}} \quad \text{and}\quad \|\mb{y}\|<|\mb{b}\cdot\mb{y}|_\bZ X
\]
have an integer solution $\mb{y}\in \bZ^m$. Denote by $\DI_\eps(\mb{b})$ the set of $A\in M_{m,n}(\bR)$ such that $^tA$ is $\eps$-Dirichlet improvable for $\mb{b}$. Note that $\Sing(\mb{b})=\displaystyle{\bigcap_{\eps>0}}\DI_\eps(\mb{b})$.
The following is the main theorem of this section.
\begin{thm}\label{Thm_Sec2}
There are constants $c_1, c_2 >0$ such that for any $\eps>0$ and any $(A,\mb{b})\in M_{m,n}(\bR)\times\bR^m$ the following holds:
If $^tA$ is $c_1\eps^{m/n}$-Dirichlet improvable for $\mb{b}$, then  $(A,\mb{b})$ is $1/\eps$-badly approximable. On the other hand, if $(A,\mb{b})$ is $1/\eps$-badly approximable, then $^tA$ is $c_2\eps^{m/n}$-Dirichlet improvable for $\mb{b}$. In particular, for any $\mb{b}\in\bR^m$
\[
\DI_{c_1 \eps^{\frac{m}{n}}}(\mb{b}) \subset \Bad^{\mb{b}}(1/\eps)\subset \DI_{c_2 \eps^{\frac{m}{n}}}(\mb{b}).
\]
\end{thm}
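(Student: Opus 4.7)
My plan is to extract both inclusions from a single ``transference inequality'' relating the two sides of the theorem, combined with a classical inhomogeneous geometry-of-numbers argument for the hard direction. First, for every $\mb{y}\in\bZ^m$ and $\mb{q}\in\bZ^n$ I would derive the key inequality
\eq{|\mb{b}\cdot\mb{y}|_\bZ\;\le\;n\,\|{^tA}\mb{y}\|_\bZ\,\|\mb{q}\|\;+\;m\,\|\mb{y}\|\,\|A\mb{q}-\mb{b}\|_\bZ.}
Writing ${^tA}\mb{y}=\mb{n}+\mb{s}$ and $A\mb{q}-\mb{b}=\mb{p}+\mb{r}$ with $\mb{n}\in\bZ^n$, $\mb{p}\in\bZ^m$, $\|\mb{s}\|=\|{^tA}\mb{y}\|_\bZ$, $\|\mb{r}\|=\|A\mb{q}-\mb{b}\|_\bZ$, one has $\mb{y}\cdot\mb{b}=(\mb{n}\cdot\mb{q}-\mb{y}\cdot\mb{p})+(\mb{s}\cdot\mb{q}-\mb{y}\cdot\mb{r})$: the first parenthesis is an integer, and the $\ell^\infty$--$\ell^1$ triangle inequality applied to the second gives the claim.

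For the inclusion $\DI_{c_1\eps^{m/n}}(\mb{b})\subset\Bad^{\mb{b}}(1/\eps)$, I would argue by contradiction. If $(A,\mb{b})$ is not $1/\eps$-badly approximable, pick a sequence $\mb{q}_k\in\bZ^n$ with $\|\mb{q}_k\|\to\infty$ and $\|A\mb{q}_k-\mb{b}\|_\bZ<\eps^{-1}\|\mb{q}_k\|^{-n/m}$, then apply the Dirichlet-improvable-for-$\mb{b}$ hypothesis at scale $X_k:=(\eps/2m)\,\|\mb{q}_k\|^{n/m}$ to obtain some $\mb{y}_k\in\bZ^m$. Such a $\mb{y}_k$ necessarily satisfies $|\mb{b}\cdot\mb{y}_k|_\bZ>0$ (otherwise the DI inequality $\|{^tA}\mb{y}_k\|_\bZ<0$ is absurd). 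Substituting both bounds into the key inequality above and dividing through by $|\mb{b}\cdot\mb{y}_k|_\bZ$ reduces the statement to $1\le n\,c_1(2m)^{m/n}+\tfrac12$, which fails as soon as $c_1\le 1/(4n(2m)^{m/n})$.

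The reverse inclusion $\Bad^{\mb{b}}(1/\eps)\subset\DI_{c_2\eps^{m/n}}(\mb{b})$ is the heart of the theorem. Given a large parameter $X$, I would set $Q:=\rho\,X^{m/n}$ for a constant $\rho=\rho(m,n)$ to be calibrated at the end. The $1/\eps$-badness of $(A,\mb{b})$ then forces the unimodular affine grid $\Lam_{A,\mb{b}}$ to avoid the symmetric convex body $B_X\subset\bR^{m+n}$ of $\ell^\infty$-radii $(\eps^{-1}Q^{-n/m},Q)$, whose volume equals $2^{m+n}\eps^{-m}$. Applying a Khintchine--Cassels-type inhomogeneous transference principle (see \cite[Ch.~V]{Cas57}) to the pair $(B_X,\Lam_{A,\mb{b}})$, the fact that the affine shift of a unimodular lattice misses a symmetric convex body of volume $\gg 1$ produces a non-zero element of the dual lattice to $u_A\bZ^{m+n}$ that is simultaneously short in the polar body $B_X^\circ$ and whose inner product with the translation vector ${}^t(\mb{b},\mb{0})$ stays a definite positive distance from $\bZ$. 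Unpacking this in coordinates yields $\mb{y}\in\bZ^m\setminus\{\mb{0}\}$ with $\|\mb{y}\|\ll\eps Q^{n/m}$, $\|{^tA}\mb{y}\|_\bZ\ll Q^{-1}$, and $|\mb{b}\cdot\mb{y}|_\bZ\gg 1$, all implicit constants depending only on $m,n$. Calibrating $\rho$ so that these three bounds exactly translate into the two required DI-for-$\mb{b}$ inequalities at scale $X$ identifies an admissible $c_2$.

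The delicate point is this last invocation: one needs the inhomogeneous transference principle in a quantitative form that \emph{simultaneously} controls the $B_X^\circ$-norm of $\mb{y}$ and provides a uniform (in $\eps$) lower bound on $|\mb{b}\cdot\mb{y}|_\bZ$, rather than merely a qualitative existence statement. Once such a form is at hand, the characteristic exponent $m/n$ on the DI side arises mechanically: the volume $2^{m+n}\eps^{-m}$ of $B_X$ gets redistributed across the $m$ coordinates in the $\mb{x}$-direction of $B_X^\circ$, producing the $m/n$-th power after exchanging homogeneity degrees in the polar.
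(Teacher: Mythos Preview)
Your approach is the paper's approach: both directions are extracted from Cassels' inhomogeneous transference theorem (Theorem~XVII in Chapter~V of \cite{Cas57}). Your first inclusion is correct and amounts to a self-contained proof of the ``necessary'' half of that theorem, followed by the same substitution the paper performs.

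For the hard inclusion, however, you have slightly mis-identified what the transference principle outputs, and this creates a gap. You ask the Cassels-type statement to deliver absolute bounds $\|\mb{y}\|\ll\eps Q^{n/m}$, $\|{}^tA\mb{y}\|_\bZ\ll Q^{-1}$ together with a uniform lower bound $|\mb{b}\cdot\mb{y}|_\bZ\gg 1$. But then the DI-for-$\mb{b}$ inequality $\|\mb{y}\|<|\mb{b}\cdot\mb{y}|_\bZ X$ would require (up to constants) $\eps\rho^{n/m}X<X$, which no fixed $\rho=\rho(m,n)$ can arrange for all $\eps>0$; indeed your body $B_X$ has volume $2^{m+n}\eps^{-m}$, so for large $\eps$ the ``volume $\gg 1$'' hypothesis itself fails. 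More importantly, Cassels' theorem does \emph{not} give an absolute lower bound on $|\mb{b}\cdot\mb{y}|_\bZ$. What the contrapositive of the sufficient condition actually yields is the \emph{relative} pair
\[
\|{}^tA\mb{y}\|_\bZ<\ga^{-1}|\mb{b}\cdot\mb{y}|_\bZ\,T^{-1},\qquad \|\mb{y}\|<\ga^{-1}|\mb{b}\cdot\mb{y}|_\bZ\,\eps\,T^{n/m},
\]
with $\ga=2^{m-1}((m+n)!)^{-2}$. These are already precisely the two DI-for-$\mb{b}$ inequalities after the substitution $X=\ga^{-1}\eps T^{n/m}$, and the exponent $m/n$ falls out of solving $T^{-1}$ in terms of $X$, not from any polar-body volume redistribution. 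So the ``delicate quantitative form'' you flag as the crux is not needed: once the output of Cassels is stated correctly, the DI-for-$\mb{b}$ definition is designed to match it verbatim, uniformly in $\eps$.
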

\begin{proof}[Proof of Theorem \ref{Thm_1}]
Taking intersection over all $\eps>0$, Theorem \ref{Thm_Sec2} implies Theorem \ref{Thm_1}.
\end{proof}
The following transference principle is the main ingredient of the proof of Theorem \ref{Thm_Sec2}.
\begin{thm}\label{Thm_TP}\cite[Theorem \rom{17} in Chapter \rom{5}]{Cas57}
Let $A\in M_{m,n}(\bR)$, $\mb{b}\in\bR^m$, $C>0$, $X>1$ be given.
\begin{enumerate}
\item\label{Item_Nec} A necessary condition that 
\eqlabel{Eq_Inhomo}{
\|A\mb{q}-\mb{b}\|_\bZ \leq C\quad\text{and}\quad \|\mb{q}\|\leq X,
} for some $\mb{q}\in\bZ^n$ is that
\eqlabel{Eq_homo}{
|\mb{b}\cdot \mb{y}|_\bZ \leq \gamma \max\left( X \|{^t A}\mb{y}\|_\bZ, C\|\mb{y}\|\right),
} hold for all $\mb{y}\in\bZ^m$ with $\gamma=m+n$.
\item\label{Item_Suf} A sufficient condition that \eqref{Eq_Inhomo} has a solution $\mb{q}\in\bZ^n$ is that \eqref{Eq_homo} hold for all $\mb{y}\in\bZ^m$ with $\gamma=2^{m-1}((m+n)!)^{-2}$.
\end{enumerate}
\end{thm}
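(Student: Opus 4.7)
The plan is to split into necessity (part (1)) and sufficiency (part (2)), treating each by an appropriate duality argument on the lattice $\bZ^{m+n}$. For part (1), assume $\mb{q}\in\bZ^n$ satisfies \eqref{Eq_Inhomo}, choose $\mb{p}\in\bZ^m$ attaining $\|A\mb{q}-\mb{b}-\mb{p}\|_\infty=\|A\mb{q}-\mb{b}\|_\bZ$, and for any given $\mb{y}\in\bZ^m$ choose $\mb{r}\in\bZ^n$ attaining $\|{}^tA\mb{y}-\mb{r}\|_\infty=\|{}^tA\mb{y}\|_\bZ$. The algebraic identity
\[
-\mb{b}\cdot\mb{y} \;=\; \mb{y}\cdot(A\mb{q}-\mb{b}-\mb{p}) \;-\; ({}^tA\mb{y}-\mb{r})\cdot\mb{q} \;+\; (\mb{r}\cdot\mb{q}-\mb{y}\cdot\mb{p})
\]
exhibits $-\mb{b}\cdot\mb{y}$ as an integer plus two explicitly small summands. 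Bounding each summand by the elementary inequality $|\mb{u}\cdot\mb{v}|\leq k\|\mb{u}\|_\infty\|\mb{v}\|_\infty$ on $\bR^k$ gives $|\mb{b}\cdot\mb{y}|_\bZ\leq mC\|\mb{y}\|+nX\|{}^tA\mb{y}\|_\bZ\leq(m+n)\max\bigl(X\|{}^tA\mb{y}\|_\bZ,\,C\|\mb{y}\|\bigr)$, as required.

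For part (2), I would argue by contrapositive: assuming no $\mb{q}\in\bZ^n$ satisfies \eqref{Eq_Inhomo}, I produce $\mb{y}\in\bZ^m$ violating \eqref{Eq_homo} with $\gamma = 2^{m-1}((m+n)!)^{-2}$. Introduce the symmetric convex body
\[
K \;=\; \bigl\{(\mb{p},\mb{q})\in\bR^{m+n} : \|\mb{p}-A\mb{q}\|_\infty\leq C,\ \|\mb{q}\|_\infty\leq X\bigr\},
\]
whose volume is $2^{m+n}C^m X^n$; the hypothesis is precisely that the shifted lattice $\bZ^{m+n}-(\mb{b},\mb{0})$ meets no interior point of $K$. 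The strategy is a three-step geometry-of-numbers chain. (i) Minkowski's second theorem applied to $(K,\bZ^{m+n})$ bounds the product of successive minima $\prod_i\lambda_i\leq 2^{m+n}/\vol(K)$. (ii) Mahler's polar-body reciprocity between primal and dual successive minima yields a comparably small successive minimum for $K^\circ$ with respect to the dual lattice $(\bZ^{m+n})^\vee$, losing a factor of $((m+n)!)^2$. (iii) A standard inhomogeneous-to-homogeneous transference converts the covering-radius lower bound at the single point $(\mb{b},\mb{0})$ into the existence of a short dual vector $\mb{y}^\star=(\mb{y},\mb{z})\in(\bZ^{m+n})^\vee$ with nontrivial pairing $|\mb{y}\cdot\mb{b}|_\bZ\geq\gamma^{-1}$, contributing the additional factor $2^{m-1}$. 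Unwinding the polar-body description of $K^\circ$ then translates the $\|\mb{y}^\star\|_{K^\circ}$ bound into the quantities $X\|{}^tA\mb{y}\|_\bZ$ and $C\|\mb{y}\|$, producing the desired violation of \eqref{Eq_homo} for that $\mb{y}\in\bZ^m$.

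The main obstacle is step (iii): the quantitative transference converting a metric ``far from the lattice'' condition on the single point $(\mb{b},\mb{0})$ into a short dual vector paired nontrivially with $\mb{b}$, with the sharp constant $\gamma=2^{m-1}((m+n)!)^{-2}$. The factor $((m+n)!)^{-2}$ is the standard Minkowski--Mahler loss from passing between primal and polar successive minima; the extra $2^{m-1}$ is subtler and tracks a sign choice among the $2^m$ corners in the $\mb{p}$-coordinates needed so that both the short-vector and the large-pairing conclusions hold simultaneously. This is the technical heart of Cassels' argument in Chapter~V, Section~V.4. For the applications in the present paper it would actually suffice to establish a qualitative version with \emph{some} unspecified dimension-dependent constant $\gamma_{m,n}>0$, since Theorem~\ref{Thm_Sec2} eventually absorbs all dimension-dependent factors into the constants $c_1,c_2$; extracting the optimal constants requires the careful bookkeeping in Cassels' original argument.
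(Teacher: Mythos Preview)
The paper does not give its own proof of this theorem: it is quoted verbatim as \cite[Theorem~\rom{17} in Chapter~\rom{5}]{Cas57} and used as a black box in the proof of Theorem~\ref{Thm_Sec2}. So there is nothing in the paper to compare your argument against.

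On the merits of your write-up: your proof of part~\eqref{Item_Nec} is correct and is exactly the standard argument. There is a harmless sign slip in the displayed identity --- the integer term should read $(\mb{y}\cdot\mb{p}-\mb{r}\cdot\mb{q})$ rather than $(\mb{r}\cdot\mb{q}-\mb{y}\cdot\mb{p})$ --- but since only its integrality is used, the bound
\[
|\mb{b}\cdot\mb{y}|_\bZ \leq mC\|\mb{y}\|+nX\|{}^tA\mb{y}\|_\bZ \leq (m+n)\max\bigl(X\|{}^tA\mb{y}\|_\bZ,\,C\|\mb{y}\|\bigr)
\]
goes through unchanged.

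For part~\eqref{Item_Suf} you give an outline rather than a proof. The shape is right --- this is indeed a geometry-of-numbers duality argument in the spirit of Minkowski--Mahler applied to the body $K$ you define --- and you correctly identify step~(iii) as the crux and correctly note that for the purposes of this paper any dimension-dependent constant $\gamma_{m,n}>0$ would do. But as written, step~(iii) is an assertion, not an argument: you have not explained how the failure of \eqref{Eq_Inhomo} (a statement about a single translate missing $K$) yields a dual lattice vector $(\mb{y},\mb{z})$ with \emph{both} a small $K^\circ$-norm \emph{and} $|\mb{b}\cdot\mb{y}|_\bZ$ bounded away from zero. The covering-radius/successive-minima transference you invoke gives a relation between $\mu(K,\bZ^{m+n})$ and $\lambda_1(K^\circ,\bZ^{m+n})$, but failure at a single point $(\mb{b},\mb{0})$ does not immediately bound the covering radius from below, and the $2^{m-1}$ factor does not drop out of the standard Jarn\'ik--Mahler inequality. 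If you want a self-contained proof rather than a citation, this is the step that needs to be written out; otherwise, citing Cassels (as the paper does) is the appropriate move.
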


\begin{proof}[Proof of Theorem \ref{Thm_Sec2}]
Observe that $(A,\mb{b})$ is $1/\eps$-badly approximable if and only if for all large enough $T>1$, there is no solution $\mb{q}\in\bZ^n$ such that
\[
\|A\mb{q}-\mb{b}\|_\bZ \leq \eps^{-1} T^{-\frac{n}{m}}\quad\text{and}\quad \|\mb{q}\|\leq T.
\]
By Theorem \ref{Thm_TP} \eqref{Item_Suf}, this implies that
for all large enough $T>1$, there exists $\mb{y}\in\bZ^m$ such that
\[
\|{^t A}\mb{y}\|_\bZ < |\mb{b}\cdot \mb{y}|_\bZ \ga^{-1}T^{-1}\quad\text{and}\quad \|\mb{y}\|<|\mb{b}\cdot \mb{y}|_\bZ \ga^{-1}\eps T^{\frac{n}{m}}.
\]
If we substitute $\ga^{-1}\eps T^{\frac{n}{m}}=X$, then we have $\ga^{-1}T^{-1} = \ga^{-1-\frac{m}{n}}\eps^{\frac{m}{n}}X^{-\frac{m}{n}}$, hence, $^{t}A$ is $c_2 \eps^{\frac{m}{n}}$-Dirichlet improvable for some constant $c_2>0$. 

Suppose that $A\in \DI_{c_1\eps^{\frac{m}{n}}}(\mb{b})$ with some constant $c_1>0$ to be determined, that is, for all large enough $X>1$, there exists $\mb{y}\in\bZ^m$ such that
\[
\|{^t A}\mb{y}\|_\bZ < c_1\eps^{\frac{m}{n}} |\mb{b}\cdot \mb{y}|_\bZ  X^{-\frac{m}{n}}\quad\text{and}\quad \|\mb{y}\|<|\mb{b}\cdot \mb{y}|_\bZ X,
\]
or equivalently,
\[
|\mb{b}\cdot\mb{y}|_\bZ > \ga \max\left(\ga^{-1}c_1^{-1}\eps^{-\frac{m}{n}}X^{\frac{m}{n}} \|{^t A}\mb{y}\|_\bZ, \ga^{-1}X^{-1} \|\mb{y}\|\right).
\]
By Theorem \ref{Thm_TP} \eqref{Item_Nec}, there is no solution $\mb{q}\in\bZ^n$ such that
\[
\|A\mb{q}-\mb{b}\|_\bZ \leq \ga^{-1}X^{-1}\quad\text{and}\quad \|\mb{q}\|\leq \ga^{-1}c_1^{-1}\eps^{-\frac{m}{n}}X^{\frac{m}{n}}.
\]
By substituting $\ga^{-1}c_1^{-1}\eps^{-\frac{m}{n}}X^{\frac{m}{n}} = T$ and taking $c_1 = \ga^{-1-\frac{m}{n}}$, we have
\[
\|A\mb{q}-\mb{b}\|_\bZ \leq \eps^{-1} T^{-\frac{n}{m}} \quad\text{and}\quad \|\mb{q}\|\leq T.
\]
Therefore, $(A,\mb{b})$ is $1/\eps$-badly approximable.
\end{proof}

\section{Preliminaries for $\dim_H \Bad^{b}(\infty)$}\label{sec3:pre}
Throughout Sections \ref{sec3:pre} and \ref{sec3}, we assume $m=1$ and $n\geq 2$, and establish Theorem \ref{Thm_2}. 
By the transference argument (Theorem \ref{Thm_Sec2}), it suffices to estimate $\dim_H \Sing(b)$ and $\dim_H \DI_\eps(b)$ for fixed $b \in \R \setminus \Z$. Our approach essentially follows \cite{CC16}. However, since $\Sing(b)$ is strictly smaller than the set of singular vectors (see Remark \ref{Rem_1}), we need to construct a refined fractal structure, smaller than that of \cite{CC16}. This construction will be carried out in Section \ref{sec3}. 

This section is devoted to the necessary preliminaries for this purpose.

\subsection{Self-similar coverings}
We first recall the definition of self-similar structures and some related dimension results following \cite{CC16}.

We equip $\bR^n$ with the metric induced by the Euclidean norm $\|\cdot\|_2$. 
A \textit{self-similar structure} on $\bR^n$ is a triple $(J,\sigma,B)$, where $J$ is countable, $\sigma\subseteq J\times J$, and $B$ is a map from $J$ into the set of bounded subsets of $\bR^n$. A \textit{$\sigma$-admissible sequence} is a sequence $(x_k)_{k\in\bN}$ in $J$ such that 
for any $k$, $(x_k,x_{k+1})\in \sigma$.

For a subset $S$ of $\bR^n$, a \textit{self-similar covering} of $S$ is a self-similar structure $(J,\sigma,B)$ such that, for all $\theta$ in $S$, there exists a $\sigma$-admissible sequence $(x_k)_{k\in\mathbb{N}}$ in $J$ such that
\begin{itemize}
    \item $\lim_{k\to \infty}\diam B(x_k)=0$;
    \item $\bigcap_{k\in \mathbb{N}}B(x_k)=\{\theta\}$.
\end{itemize}
In this case, the self-similar structure $(J,\sigma,B)$ is said to cover the subset $S$.
Given $x\in J$, we denote 
\[
\sigma(x)=\{y\in J : (x,y)\in \sigma\}.
\]
A triple $(J,\sigma,B)$ is said to be a \textit{strictly nested self-similar structure} on $\bR^n$ if it is a self-similar structure on $\bR^n$ such that 
\begin{itemize}
    \item for all $x\in J$, $\sigma(x)$ is finite, $B(x)$ is a nonempty compact subset of $\bR^n$, and for all $y\in\sigma(x)$ we have $B(y)\subset B(x)$;
    \item for each $\sigma$-admissible sequence $(x_k)_{k\in\mathbb{N}}$, $\lim_{k\to\infty}\diam B(x_k)=0$;
    \item for each $x\in J$ and each $y\in\sigma(x)$, $\diam B(y)<\diam B(x)$.
\end{itemize}
We need the following dimension result.
\begin{thm}\cite[Theorem 3.6]{CC16}
\label{thm:CCLow3.6}
  Let $S$ be a subset of $\bR^n$.
  Suppose that there is a strictly nested self-similar structure $(J,\sigma,B)$ that covers a subset of $S$, a subset $J_0\subset J$ that contains a tail of any $\sigma$-admissible sequence, a function $\rho:J\to (0,1)$, and two constants $c,s\geq 0$ such that
  \begin{enumerate}
      \item\label{item_wlower_1} for each $x\in J_0$ and each $y\in\sigma(x)$, there are at most $c$ points $z$ in $\sigma(x)\setminus\{y\}$ such that
      \[
    d(B(y),B(z))\leq\rho(x)\diam B(x);
      \]
      \item\label{item_wlower_2} for every $x \in J_0$,
      \[
\sum_{y\in\sigma(x)}\left(\rho(y)\diam B(y)\right)^s \geq (c+1)\left(\rho(x)\diam B(x)\right)^s;
      \]
      \item\label{item_wlower_3} for all $\alpha\in J_0$ and all $y\in\sigma(x)$, $\rho(y)\diam B(y)<\rho(x)\diam B(x)$.
      \end{enumerate}
      Then $S$ contains a subset of positive $s$-dimensional Hausdorff measure.
\end{thm}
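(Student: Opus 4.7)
The plan is to construct, inside $\Sing(b)$ (respectively $\DI_\eps(b)$), a strictly nested self-similar structure satisfying the hypotheses of Theorem \ref{thm:CCLow3.6} with the desired exponent. The starting point is the Cheung--Chevallier construction in \cite{CC16} that realizes the sharp lower bound $n^2/(n+1)$ for $\dim_H\Sing$ together with its quantitative refinement for $\dim_H \DI_\eta$. In their construction a node $x\in J$ encodes a rational point $\mb p/y\in\bQ^n$ (with denominator $y\in\bZ$ produced from a best-approximation scheme) together with an anisotropic box $B(x)\subset\bR^n$ of sides comparable to $y^{-(n+1)/n}$; the successor relation passes to the next admissible denominator, and the verification of condition (2) of Theorem \ref{thm:CCLow3.6} rests on a denominator counting estimate at each level. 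Our task is to prune the tree so that every denominator $y$ that appears satisfies $|by|_\bZ>\delta_k$, with $\delta_k\downarrow 0$ in the case of $\Sing(b)$ and $\delta$ a fixed constant for $\DI_\eps(b)$. By the observation following \eqref{Eq_Idea}, the limit set of the pruned tree then lies inside $\Sing(b)$, respectively $\DI_\eps(b)$.

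For \eqref{Eq_dim_sing}, fix any $s<s_0:=n^2/(n+1)$, choose $\delta_k\downarrow 0$ slowly, and at level $k$ keep only the children whose denominator $y$ satisfies $|by|_\bZ>\delta_k$. For $b$ irrational, Weyl equidistribution of $\{by\}$ over a sufficiently long dyadic range of $y$ bounds the density of excluded denominators by $2\delta_k+o(1)$; for $b$ rational, the density is $1/q$ as soon as $\delta_k<1/(2q)$. In either case the left-hand side of condition (2) of Theorem \ref{thm:CCLow3.6} shrinks by at most a factor $1-O(\delta_k)$, which is absorbed by the gap $s_0-s$ once $k$ is sufficiently large. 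Conditions (1) and (3) are inherited unchanged from \cite{CC16}. Theorem \ref{thm:CCLow3.6} then yields positive $s$-dimensional Hausdorff measure inside $\Sing(b)$, and letting $s\uparrow s_0$ produces \eqref{Eq_dim_sing}.

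For \eqref{Eq_dim_DI} with $b=c/d$, $\gcd(c,d)=1$ and $d\geq 2$, set $\delta=1/(2d)$, so that $|by|_\bZ>\delta$ is equivalent to $d\nmid y$. Here we apply the CC16 construction aimed at $\DI_\eta$ rather than $\Sing$, which attains exponent $s_0+G(\eta)$ for some positive function $G$ that decays only polynomially in $\eta$; the hypothesis $t>n$ guarantees $\eps^t\ll G(c_1\eps^{1/n})$ for small $\eps$, providing ample margin. Pruning deletes a fixed $1/d$ fraction of denominators at every level. Rerunning the CC16 denominator counting restricted to $y$ with $d\nmid y$, we find that condition (2) of Theorem \ref{thm:CCLow3.6} continues to hold, but with exponent $s_0+G(\eps)-H_k$, where the level-dependent loss $H_k$ (arising from the $1-1/d$ deficit and the logarithmic growth of $\diam B(x)/\diam B(y)$ with $k$) tends to $0$ as the level increases. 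Truncating the tree to nodes at level $\geq k_0$ with $k_0$ chosen so that $H_k<G(\eps)-\eps^t$ for all $k\geq k_0$ yields condition (2) for the target exponent $s=s_0+\eps^t$, and Theorem \ref{thm:CCLow3.6} finishes the proof.

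The chief obstacle is quantitative control on the effect of the pruning on condition (2), particularly for \eqref{Eq_dim_DI} where a definite improvement $\eps^t$ over $s_0$ is required. Two ingredients are needed: an equidistribution estimate for $\{by\}$ along the CC16 best-approximation denominators (Weyl's inequality for irrational $b$, uniform distribution modulo $d$ for rational $b=c/d$), with explicit rates at the relevant dyadic scale of $y$; and a careful recounting of admissible denominators modulo this restriction inside the CC16 inductive step, so that the loss $H_k$ can be pinned down and bounded by a convergent series. Given these, the strict nesting and conditions (1) and (3) of Theorem \ref{thm:CCLow3.6} are inherited directly from \cite{CC16}.
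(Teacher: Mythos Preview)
Your proposal does not attempt to prove the stated theorem. Theorem \ref{thm:CCLow3.6} is an abstract result about strictly nested self-similar structures, quoted verbatim from \cite[Theorem 3.6]{CC16}; the present paper does not prove it but uses it as a black box. What you have written is instead a sketch of the proof of Theorem \ref{Thm_Sec3}, the lower bounds \eqref{Eq_dim_sing} and \eqref{Eq_dim_DI}, which \emph{applies} Theorem \ref{thm:CCLow3.6} rather than establishing it.

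Setting that mismatch aside and reading your text as a proposal for Theorem \ref{Thm_Sec3}, there is one substantive divergence from the paper's argument. To control the effect of pruning on condition \eqref{item_wlower_2} you invoke Weyl equidistribution of $\{by\}$ over dyadic ranges of denominators. The paper never uses equidistribution. It exploits instead the structure of the fiber $\zeta(x,\al,\eps)$: the admissible heights with $\pi_x(y)=\al$ form an arithmetic progression $q+\ell s$ in $\ell$, where $s=|x|$ is the height of the \emph{parent} node, and by induction $|bs|_\bZ>\eta 2^{-i}$ (respectively $|bs|_\bZ\geq 1/d$). The triangle inequality $|b(q+(\ell\pm1)s)|_\bZ\geq|bs|_\bZ-|b(q+\ell s)|_\bZ$ then shows that two consecutive $\ell$'s cannot both be excluded, so at most half of $\zeta(x,\al,\eps)$ is discarded (Propositions \ref{Prop_lam_est} and \ref{Prop_count}). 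This yields a uniform multiplicative loss of $\tfrac12$, with no error term, no dependence on the length of the range, and no distinction between rational and irrational $b$. Your Weyl route is more delicate: the quantity being equidistributed is $b(q+\ell s)$ in $\ell$ for fixed $q,s$, not $by$ over a dyadic block of $y$'s, and the range of $\ell$ available in a single fiber may be too short for a useful discrepancy bound. The paper's inductive trick sidesteps all of this.
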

We have introduced the notion of self-similar coverings and discussed their main properties. To make these ideas more concrete, we now give a simple example.

\medskip
\noindent\textbf{Example} (The middle third Cantor set). Let $E_0=[0,1]$ and $E_k = \frac{1}{3}E_{k-1} \cup \left(\frac{2}{3}+\frac{1}{3}E_{k-1}\right)$ for $k\geq 1$. Then the middle third Cantor set is given by $F=\bigcap_{k=0}^\infty E_k$. 
Since $E_k$ consists of $2^k$ intervals of length $3^{-k}$, denoted by $E_{k,j}$ for $j=1,\dots,2^k$ in the natural left-to-right order, we define the self-similar structure by
\begin{align*}
J &= \{(k,j): k\geq 0,\ j=1,\dots,2^k\}, \\
\sigma(k,j) &= \{(k+1,2j-1),(k+1,2j)\}, \\
B(k,j) &= E_{k,j}.
\end{align*}
Then the triple $(J,\sigma,B)$ is a strictly nested self-similar covering of $F$. 
Moreover, the conditions \eqref{item_wlower_1}, \eqref{item_wlower_2}, and \eqref{item_wlower_3} 
of Theorem~\ref{thm:CCLow3.6} are satisfied with $c=0$, $\rho<\tfrac{1}{3}$, and 
$s=\tfrac{\log 2}{\log 3}$. 
Therefore, $\dim_H F \geq \tfrac{\log 2}{\log 3}$, and in fact equality holds 
(see, e.g., \cite[Example~3.7]{Fal}).

\subsection{Fractal structure in \cite{CC16}}
In this subsection, we recall notation and the fractal structure given in \cite{CC16}. Note that our ambient space is $\bR^n$ while the ambient space of \cite{CC16} is $\bR^d$. Therefore, $d$ in \cite{CC16} should be changed to $n$ in our case. Since we do not consider the result in \cite[Section 7]{CC16}, we only consider the fractal structure of \cite[Section 6]{CC16} in the case $x=y$.
 
Let 
\[
Q=\{(p_1,\dots,p_n,q)\in \bZ^{n+1}: \gcd(p_1,\dots,p_n,q)=1, q>0\}.
\]
Given $x=(p,q)=(p_1,\dots,p_n,q)\in Q$, we denote
\[
|x|=q\qquad\text{and}\qquad \wh{x}=\frac{p}{q},
\]
and define the \textit{Farey lattice}
\[
\Lam_x = \bZ^n +\bZ\wh{x} = \pi_x (\bZ^{n+1}),
\]
where $\pi_x:\bR^{n+1}\to\bR^n$ is the map given by $\pi_x(r,s)=r-s\wh{x}$ for $(r,s)\in\bR^n\times\bR$.
Note that $\Lam_x$ is a lattice in $\bR^n$ with the covolume $\vol\ \Lam_x = |x|^{-1}$. Here and hereafter, $\vol\ \Lam$ stands for the covolume of the lattice $\Lam$.
For each $i=1,\dots,n$, denote the $i$-th successive minimum of $\Lam_x$ by $\lam_i(x)$ with respect to the Euclidean norm $\|\cdot\|_2$ on $\bR^n$, and the normalized $i$-th successive minimum by
$\wh{\lam}_i(x)=|x|^{1/n}\lam_i(x)$. 

For each $x\in Q$, let $\Lam_x'$ be a codimension $1$ sublattice of $\Lam_x$ with minimal covolume, $H_x'$ be the real span of $\Lam_x'$, and $H_x = \pi_x^{-1}H_x'$. 
Given $x\in Q$ and a primitive $\al\in \Lam_x$, we let
\[
\Lam_{\al^\perp} = \pi_{\al}^\perp (\Lam_x),
\]
where $\pi_\al^\perp$ is the orthogonal projection of $\bR^n$ onto $\al^\perp$, the subspace of vectors of $\bR^n$ orthogonal to $\al$.
For any $y\in Q$, let us denote
\[
|x\wedge y| = |x||y| \|\wh{x} - \wh{y}\|_2.
\]
For $y\in Q$ with $\pi_x (y)=\al$, the covolume of $\Lam_{\al^\perp}$ is given by 
$$\vol\ \Lam_{\al^\perp} = \frac{\vol\ \Lam_x}{\|\al\|_2}= \frac{1}{|x|\|\pi_x(y)\|_2}=\frac{1}{|x\wedge y|}.$$
As before, we denote the first successive minimum of $\Lam_{\al^\perp}$ by $\lam_1(\al)$ and its normalized version by $\wh{\lam}_1(\al) = |y\wedge z|^{1/(n-1)}\lam_1(\al)$.
Given $\eps>0$ and $x\in Q$, let
\[
\Lam_x(\eps) = \{\al\in\Lam_x : \al \text{ is primitive and } \wh{\lam}_1(\al) >\eps\}.
\]

We fix a coset $H'$ of $H_x'$ such that $\Lam_x\cap H' \neq \varnothing$ and $\Lam_x\setminus H_x' \subset \bigcup_{k\in\bZ}kH'$ as in \cite[Subsection 6.1]{CC16}, and denote by $\al_x^{\perp}$ the unique element of $H'$ that is perpendicular to $H_x'$.
Let $H_x'(k)$ be the coset $k\al_x^\perp + H_x'$
and let $\cC(x)$ be the set of vectors in $\bR^n$ whose angle with $\al_x^\perp$ is at most $\tan^{-1}A_n$, where the constant $A_n$ is in the proof of \cite[Lemma 8.6]{CC16}. Let
\[
\cC_k'(x)=\cC(x)\cap H_x'(k) \quad\text{and}\quad \cC_N(x)=\bigcup_{k=1}^N \cC_k'(x).
\]
For $N\in\bN$, $\eps>0$, and $x\in Q$, define
\[
F_N (x,\eps) = \bigcup_{\al \in \Lam_x(\eps)\cap \cC_N(x)}\zeta(x,\al,\eps),
\]
where $\zeta(x,\al,\eps)=\left\{y\in Q: \pi_x(y)=\al, |y| \in \frac{|x\wedge y|^{n/(n-1)}}{\eps^{n/(n-1)}}(1,2)\right\}.$

The following fractal structure was established in \cite{CC16} to estimate the lower bound of the Hausdorff dimension of the $\eps$-Dirichlet improvable set.
\begin{prop}\cite[Proposition 6.6]{CC16}\label{Prop_CCfrac}
Let 
$$\sigma_{\eps,N}(x)=F_N(x,\eps), \quad B(x)=B\left(\wh{x},\frac{\lam_1(x)}{2|x|}\right), \quad Q_{\eps,N}=\bigcup_{x\in Q}\sigma_{\eps,N}(x).$$ The triple $(Q_{\eps,N},\sigma_{\eps,N},B)$ is a strictly nested self-similar structure covering a subset of the $2\eps$-Dirichlet improvable set provided $\eps$ is small enough.
\end{prop}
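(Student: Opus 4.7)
The approach is to verify two essentially independent sets of conditions. The first is purely structural: $(Q_{\eps,N}, \sigma_{\eps,N}, B)$ must be a strictly nested self-similar structure, which requires checking finiteness of each $\sigma_{\eps,N}(x)$, compactness and non-emptiness of each $B(x)$, the nesting $B(y)\subset B(x)$ whenever $y\in\sigma_{\eps,N}(x)$, strict decay of diameters along edges, and $\diam B(x_k)\to 0$ along every $\sigma_{\eps,N}$-admissible sequence. The second is the covering statement: every $\theta$ in some subset of the $2\eps$-Dirichlet improvable set should appear as the unique limit of a $\sigma_{\eps,N}$-admissible sequence.

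For the structural side, finiteness of $F_N(x,\eps)$ follows from three independent truncations built into the definition: the cone $\cC_N(x)$ restricts directions to the cosets $H_x'(k)$, $1\le k\le N$; the threshold $\wh{\lam}_1(\al)>\eps$ bounds the geometry of the sublattices $\Lam_{\al^\perp}$; and the constraint $|y|\in|x\wedge y|^{n/(n-1)}\eps^{-n/(n-1)}(1,2)$ is bounded on both sides. The nesting $B(y)\subset B(x)$ I would verify by writing $\wh{y}-\wh{x}=\al/|y|$, estimating $\|\al\|_2$ using $\wh{\lam}_1(\al)>\eps$ and the size constraint on $|y|$, and combining with Minkowski's second theorem applied to $\Lam_y$ (whose covolume is $|y|^{-1}$) to conclude the elementary triangle inequality $\|\wh{y}-\wh{x}\|_2+\lam_1(y)/(2|y|)\le \lam_1(x)/(2|x|)$. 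Strict decay of diameters then drops out from the same bounds since $|y|\gg|x|$, and $\diam B(x_k)\to 0$ along admissible chains is an immediate iteration.

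For the covering step I would fix $\theta$ in a suitable subset of the $2\eps$-Dirichlet improvable set and build the admissible sequence from the classical sequence $(x_k)$ of best approximations to $\theta$. Setting $\al_k=\pi_{x_k}(x_{k+1})$, the Dirichlet-improvability assumption with constant $2\eps$ converts, after bookkeeping, into the size window defining $\zeta(x_k,\al_k,\eps)$; the factor of $2$ is exactly what is needed to pass from the Dirichlet bound $2\eps X^{-1/n}$ to the half-minimum radius $\lam_1(x)/(2|x|)$ of $B(x)$. Minimality of best approximations forces $\al_k$ to be primitive in $\Lam_{x_k}$, and the shape of $\Lam_{x_k}$ imposed by the Dirichlet-improvability of $\theta$ yields the lower bound $\wh{\lam}_1(\al_k)>\eps$ together with membership of $\al_k$ in $\cC_N(x_k)$ for $N$ chosen uniformly in terms of $\eps$.

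The step I expect to be the main obstacle is precisely this last one: matching the explicit cone and shape conditions defining $F_N$ against the intrinsic geometry of best approximations. Verifying that $\al_k\in\Lam_{x_k}(\eps)\cap\cC_N(x_k)$ for every $k$ requires a quantitative angular analysis of how successive best approximations can be oriented, using the geometry of $\Lam_{\al_k^\perp}$ and control of its first minimum in terms of the Dirichlet constant. The parameter $N$ is a uniformization to be chosen a posteriori as a function of $\eps$, and the bulk of the technical work will be hidden in this geometric step; once it is in place, the covering property and the $\diam\to 0$ condition together yield the claim.
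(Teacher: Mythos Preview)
The paper does not prove this proposition at all; it is quoted verbatim from \cite[Proposition~6.6]{CC16} and used as a black box. So there is no ``paper's proof'' to compare against, but the paper's proofs of the analogous Propositions~\ref{Prop_fractal_sing} and~\ref{Prop_fractal} make the intended argument structure clear, and your covering step is inverted relative to it.

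To say that the structure covers \emph{a subset of} the $2\eps$-Dirichlet improvable set, one must show that for every $\sigma_{\eps,N}$-admissible sequence $(x_i)$, the unique point $\theta\in\bigcap_i B(x_i)$ is $2\eps$-Dirichlet improvable; the covered subset is then simply the set of all such limits. You go in the opposite direction: you fix a Dirichlet improvable $\theta$ and try to force its best-approximation sequence to be $\sigma_{\eps,N}$-admissible. That would at best show that some Dirichlet improvable points are limits of admissible sequences, which is the wrong inclusion for Theorem~\ref{thm:CCLow3.6}: the Cantor set produced by the structure could still contain limits of admissible sequences that lie outside the Dirichlet improvable set, and then the dimension lower bound would not transfer.

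Once the direction is reversed, the ``main obstacle'' you identify evaporates. Given an admissible sequence, the conditions $\al_k\in\Lam_{x_k}(\eps)\cap\cC_N(x_k)$ and $x_{k+1}\in\zeta(x_k,\al_k,\eps)$ are \emph{hypotheses}, not conclusions. The actual work, as the paper carries out in the proofs of Propositions~\ref{Prop_fractal_sing} and~\ref{Prop_fractal}, is: (i) invoke \cite[Lemma~4.2]{CC16} to deduce that the $x_i$ are in fact best approximations to $\theta$; (ii) use \cite[Lemma~4.3(iii)]{CC16} to pass from $\|q_i\wh{x}_{i+1}-p_i\|$ to $\|q_i\theta-p_i\|$; (iii) use the size window $|x_{i+1}|>(|x_i\wedge x_{i+1}|/\eps)^{n/(n-1)}$ to get $\|q_i\wh{x}_{i+1}-p_i\|_2<\eps|x_{i+1}|^{-1/n}$. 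This yields the Dirichlet improvability of $\theta$ directly. Your structural checks (finiteness of $F_N(x,\eps)$, nesting of balls, diameter decay) are fine in outline.
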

Cheung and Chevallier were able to bound from below the Hausdorff dimension of this fractal structure by establishing spacing and local finiteness properties of this structure \cite[Subsection 6.3]{CC16} and controlling the distribution of Farey lattices with bounded distortion \cite[Section 8]{CC16}. 

We will need the following slight generalization of \cite[Proposition 6.11]{CC16} concerning the distribution of Farey lattices with bounded distortion. The proof is essentially the same as in \cite{CC16}, but whereas \cite[Proposition 6.11]{CC16} is stated under the assumption $t>n$, our application requires the case $t\leq n$. For the sake of completeness, we provide the proof.
\begin{prop}\cite[Proposition 6.11]{CC16}\label{Prop_CC16_6.11}
    For any real number $t$, if $\eps$ is small enough, then for all $N\geq 1$ and $x\in Q$ we have
    \[
    S_1(N,t) \gg \sum_{n=1}^N \frac{1}{k^{1+(t-n)\frac{n}{n-1}}},
    \]
    where 
    \[
    S_1(N,t)=\sum_{k=1}^{N}\frac{1}{k^{1+(t-n)\frac{n}{n-1}}}\frac{\card(\cC_k'(x)\cap \Lam_x(\eps))}{\card(\cC_k'(x)\cap\Lam_x)}.
    \]
\end{prop}
\begin{proof}
By \cite[Proposition 8.1]{CC16}, there is a constant $C$ depending only on the dimension $n$ such that
for all $k\geq 1$ and $x\in Q$ we have
\[
\frac{\card(\cC_k'(x)\cap \Lam_x(\eps))}{\card(\cC_k'(x)\cap\Lam_x)} \gg \frac{\phi(k)}{k}\left(1-C\eps^{n-1}\frac{D_1(k)}{k}\right),
\]
where $\phi(k)$ is the Euler function and $D_1(k)=\sum_{\ell|k}\ell$.
Write $\del =(t-n)\frac{n}{n-1}$. It follows from the proof of \cite[Proposition 6.11]{CC16} that
\[
S_1(N,t)\gg \sum_{k=1}^N \frac{1}{k^{1+\del}}\frac{\phi(k)}{k}\left(1-C\eps^{n-1}\frac{D_1(k)}{k}\right) \gg (1-C_t \eps^{n-1})\sum_{k=1}^N\frac{1}{k^{1+\del}}
\]
for some absolute constant $C_t$ depending on $t$. Hence, if $\eps$ is small enough (depending on $t$), the proposition follows.
\end{proof}

\section{Dimension estimates for $\Bad^{b}(\infty)$}\label{sec3}
The main goal of this section is to prove the following theorem, which immediately yields Theorem~\ref{Thm_2}.
\begin{thm}\label{Thm_Sec3} 
For any $b\in\bR\setminus\bZ$,
\eqlabel{Eq_dim_sing}{
\dim_H \Sing(b) \geq\frac{n^2}{n+1}.
}
Moreover, if $b\in\bQ\setminus\bZ$, then for any $t>n$ and all small enough $\eps>0$, 
\eqlabel{Eq_dim_DI}{
\dim_H \DI_{\eps}(b) \geq  \frac{n^2}{n+1} +\eps^t.
} 
\end{thm}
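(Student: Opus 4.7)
The plan is to prove both lower bounds by adapting the Cantor-set construction of Cheung--Chevallier \cite{CC16}. Recall that \cite{CC16} establishes $\dim_H\{A\in M_{1,n}(\bR):A\text{ is singular}\}\geq\frac{n^2}{n+1}$ by producing an explicit nested fractal $\cK\subset\bR^n$: one fixes a rapidly increasing sequence of heights $X_k\to\infty$, selects at each stage a well-spaced family of best-approximation denominators $y_k\in\bZ$ with $|y_k|\asymp X_k$, and defines the $k$-th level as the union over admissible $y_k$ of thin neighborhoods of the affine hyperplanes $\{A\in M_{1,n}(\bR):{}^tA\,y_k\in\bZ^n\}$. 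The dimension estimate comes from a mass-distribution argument controlled by the growth of the $X_k$ and the thickness of these ``tubes''.

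Following the heuristic of Subsection~1.4, my modification is to restrict, at each level $k$, the admissible denominators $y_k$ to the set
\[
G_{\delta_k}:=\{y\in\bZ:|by|_\bZ>\delta_k\}
\]
for a sequence of positive parameters $\delta_k$ to be chosen. If $A$ lies in the resulting modified fractal, then for every $k$ there is an admissible $y_k\in G_{\delta_k}$ with $|y_k|<\delta_k X_k$ and $\|{}^tA\,y_k\|_\bZ<\veps_k\delta_k X_k^{-1/n}$; combining with $|by_k|_\bZ>\delta_k$ gives $\|{}^tA\,y_k\|_\bZ<\veps_k|by_k|_\bZ X_k^{-1/n}$ and $|y_k|<|by_k|_\bZ X_k$, which is precisely the singular-for-$b$ inequality \eqref{Eq_NewSing} with $\veps_k$ in place of $\veps$. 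For the first statement I would take both $\delta_k\to 0$ and $\veps_k\to 0$, yielding containment of the modified fractal in $\Sing(b)$.

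The key point is that this arithmetic restriction does not decrease the Hausdorff dimension. When $b\notin\bQ$, Weyl equidistribution gives $\#(G_{\delta_k}\cap[-N,N])/(2N)\to 1-2\delta_k$, so the density of admissible $y_k$ in $\bZ$ tends to $1$ as $\delta_k\to 0$; when $b=p/q\in\bQ\setminus\bZ$ with $\gcd(p,q)=1$ and $\delta_k<1/(2q)$, one has $G_{\delta_k}=\bZ\setminus q\bZ$, an arithmetic progression of density $1-1/q$. In either case the restriction removes at most a fixed proportion of admissible best approximations at each stage, so the mass-distribution estimate of \cite{CC16} goes through up to absolute constants and yields $\dim_H\Sing(b)\geq\frac{n^2}{n+1}$.

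For the second statement, with $b=p/q\in\bQ\setminus\bZ$, the parameter $\delta$ must now be fixed throughout the construction, since it plays the role of a constant parameter in the definition of $\DI_\veps(b)$. Fixing $\delta\in(0,1/(2q))$, so that $G_\delta=\bZ\setminus q\bZ$, the modified fractal lies inside $\DI_{c\veps}(b)$ for some absolute $c>0$ whenever one uses $\veps_k\equiv\veps$. The task is now quantitative: relax the shrinking and branching rates of the \cite{CC16} construction by a small amount tied to $\veps$, so as to produce a slightly thicker Cantor set whose Hausdorff dimension exceeds $\frac{n^2}{n+1}$ by at least $\veps^t$ for every $t>n$. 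The main obstacle will be carrying out this optimization while preserving the disjointness and self-similarity on which the dimension computation rests; this is precisely why the explicit \cite{CC16} construction, rather than the parametric-geometry method of \cite{DFSU}, is essential here.
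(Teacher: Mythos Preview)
Your overall strategy is exactly the one the paper follows: modify the Cheung--Chevallier self-similar structure by restricting, at each level, the admissible denominators $q$ to those with $|bq|_\bZ$ bounded below. The conceptual picture --- take $\delta_k\to 0$ and $\veps_k\to 0$ for $\Sing(b)$, fixed $\delta$ for $\DI_\eps(b)$ when $b$ is rational --- is also the paper's.

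There is, however, a real gap in your counting argument. You justify ``the restriction removes at most a fixed proportion of admissible best approximations at each stage'' via Weyl equidistribution of $\{by\bmod 1:y\in\bZ\}$. But in the CC16 structure the admissible $y=(p,q)$ at level $i+1$ above a fixed $x=(r,s)\in Q_i$ do \emph{not} have $q$ ranging over an interval of integers: for each primitive direction $\alpha\in\Lambda_x$, the heights $q$ run through the arithmetic progression $\{q_0+\ell s:\ell\in\bZ\}$ intersected with a finite window (this is the set $\zeta(x,\alpha,\eps_i)$). Weyl equidistribution of $\{b(q_0+\ell s)\bmod 1\}_\ell$ is only asymptotic and its rate depends on the Diophantine type of $bs$; you get no uniform lower bound on the proportion surviving in each finite window, and the windows vary with $x$ and $\alpha$. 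For irrational $b$ this is a genuine obstruction, not a technicality.

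The paper's remedy is a recursive halving trick that bypasses equidistribution entirely. One takes $\delta_{i}=\eta\,2^{-i}$ and imposes the condition $|bq|_\bZ>\delta_{i+1}$ at level $i+1$. The point is that the parent $x=(r,s)$ was itself chosen at level $i$, so by induction $|bs|_\bZ>\delta_i=2\delta_{i+1}$. Then for any $\ell$ with $|b(q_0+\ell s)|_\bZ\le\delta_{i+1}$ one has
\[
|b(q_0+(\ell\pm1)s)|_\bZ\ \ge\ |bs|_\bZ-|b(q_0+\ell s)|_\bZ\ >\ 2\delta_{i+1}-\delta_{i+1}\ =\ \delta_{i+1},
\]
so bad $\ell$'s are never consecutive, and at least half of each $\zeta(x,\alpha,\eps_i)$ survives. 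This gives $\card\zeta(x,\alpha,\eps_i,b)\asymp\card\zeta(x,\alpha,\eps_i)$ uniformly, after which the CC16 lower-bound machinery (their Proposition~6.10 through Corollary~6.12) goes through verbatim. The same trick handles the rational case for $\DI_\eps(b)$: with $b=c/d$ one fixes $\delta=1/d$, uses the dichotomy $|bk|_\bZ<1/d\iff |bk|_\bZ=0$, and the inductive hypothesis $|bs|_\bZ\ge 1/d$ again forces non-consecutive bad $\ell$'s. Your observation that $G_\delta=\bZ\setminus d\bZ$ is correct, but without the inductive constraint on $s$ the entire progression $\{q_0+\ell s\}$ could lie in $d\bZ$.
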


\begin{proof}[Proof of Theorem \ref{Thm_2}]
Combining Theorem \ref{Thm_Sec2}, Theorem \ref{Thm_Sec3}, and \cite[Theorems 1.1 \& 1.3]{CC16}, Theorem \ref{Thm_2} follows.
\end{proof}

\subsection{Fractal structure for $\Sing(b)$}
In this subsection, we fix $b\in\bR\setminus\bZ$, construct a certain fractal structure for $\Sing(b)$, and prove \eqref{Eq_dim_sing} of Theorem \ref{Thm_Sec3}. For this, we modify the strategy of \cite[Proposition 6.13]{CC16} appropriately.

Fix any $\del>0$ and consider two sequences $(\eps_i)_{i\geq 0}$ and $(N_i)_{i\geq 0}$ given by
\eqlabel{Eq_def_epsN}{
\eps_i=\eta 4^{-i}\qquad\text{and}\qquad N_i=\eta^{-1}5^{\frac{n}{\del}i}
}
for some constant $\eta>0$ to be determined.
Fix arbitrary $x_0 \in Q$ and choose $\eta>0$ small enough so that the set
\[
\wt{F}_{N_0}(x_0,\eps_0)=\{y=(p,q)\in F_{N_0}(x_0,\eps_0) : |bq|_\bZ > \eta2^{-1}\}
\]
is non-empty. It is possible due to $b\in\bR\setminus\bZ$. Pick $x_1\in \wt{F}_{N_0}(x_0,\eps_0)$ and define $Q_i$ with $i\geq 1$ recursively by
\[
Q_1=\{x_1\},\quad Q_{i+1} =\bigcup_{x\in Q_i} \wt{F}_{N_i}(x,\eps_i), 
\]
where
\[
\wt{F}_{N_i}(x,\eps_i) = \left\{y=(p,q)\in F_{N_i}(x,\eps_i): |bq|_\bZ > \eta 2^{-(i+1)}\right\}.
\]
Let $Q'=\bigcup_{i\geq 1}Q_i$ be a disjoint union. For each $x\in Q_i$, define $\sigma'(x)=\wt{F}_{N_i}(x,\eps_i)$ and $B(x)=B\left(\wh{x},\frac{\lam_1(x)}{2|x|}\right)$. Then we have 

\begin{prop}\label{Prop_fractal_sing}
The triple $(Q',\sigma',B)$ is a strictly nested self-similar structure covering a subset of $\Sing(b)$.
\end{prop}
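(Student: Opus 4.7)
The plan is to prove Proposition \ref{Prop_fractal_sing} in two stages: first, that $(Q', \sigma', B)$ satisfies the strictly nested self-similar axioms (largely by inheritance from Proposition \ref{Prop_CCfrac}); second, that every limit of a $\sigma'$-admissible sequence lies in $\Sing(b)$, by converting the classical singular behavior of the limit into the singular-for-$b$ behavior via the lower bound $|bq_k|_\bZ > \eta 2^{-k}$ built into the definition of $\wt{F}$.

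For the first stage, note that for each $x \in Q_i$ we have $\sigma'(x) = \wt{F}_{N_i}(x, \eps_i) \subset F_{N_i}(x, \eps_i) = \sigma_{\eps_i, N_i}(x)$, and the balls $B(x) = B(\wh{x}, \lam_1(x)/(2|x|))$ coincide with those of Proposition \ref{Prop_CCfrac}. Hence finiteness of $\sigma'(x)$, nonempty compactness of $B(x)$, the nesting $B(y) \subset B(x)$ for $y \in \sigma'(x)$, and the strict decrease $\diam B(y) < \diam B(x)$ all follow by direct inheritance. The convergence $\diam B(x_k) \to 0$ along a $\sigma'$-admissible sequence follows from $|x_k| \to \infty$ geometrically, which is forced by the $\zeta$-set definition combined with $\eps_i \to 0$.

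For the second stage, let $(x_k)$ be a $\sigma'$-admissible sequence with $x_k = (p_k, q_k)$ and $\{\theta\} = \bigcap_k B(x_k)$. By construction $|bq_k|_\bZ > \eta 2^{-k}$, and $\theta \in B(x_k)$ gives $\|q_k \theta\|_\bZ \ll \wh{\lam}_1(x_k) q_k^{-1/n}$. The strategy is to use $y = q_k$ as the approximating integer on the interval $X \in [X_k, X_{k+1})$ with $X_k := q_k / |bq_k|_\bZ$; the condition $|y| < |by|_\bZ X$ is then automatic, while the approximation condition $\|y\theta\|_\bZ < \eps |by|_\bZ X^{-1/n}$ is tightest at $X = X_{k+1}$ and, after substituting the lower bounds on $|bq_k|_\bZ$ and $|bq_{k+1}|_\bZ$, takes the form
\[
\wh{\lam}_1(x_k) \ll C(\eps, \eta, n) \, 2^{-k(1+1/n)} (q_k/q_{k+1})^{1/n}.
\]
Using the CC16 estimate $\wh{\lam}_1(x_k) \asymp \eps_{k-1} \asymp 4^{-k}$ together with suitable control of $q_{k+1}/q_k$ inherited from the $\zeta$-set definition, the left-hand side decays like $2^{-2k}$ whereas the right-hand side decays at most like $2^{-k(1+1/n)}$ up to constants; since $1+1/n < 2$ whenever $n \geq 2$, the inequality holds for all sufficiently large $k$, yielding $\theta \in \Sing(b)$ for every $\eps > 0$.

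The main obstacle I anticipate is the quantitative bookkeeping: confirming that restricting $F_{N_i}(x, \eps_i)$ to $\wt{F}_{N_i}(x, \eps_i)$ does not damage the CC16 growth and distortion estimates (notably $\wh{\lam}_1(x_k) \asymp \eps_{k-1}$ and the control of $q_{k+1}/q_k$), and that $\wt{F}_{N_i}(x, \eps_i)$ is non-empty for every $x \in Q_i$ so that admissible sequences of arbitrary length exist. Non-emptiness uses $b \in \bR \setminus \bZ$---integers $q$ with $|bq|_\bZ$ bounded below have positive density---together with the spacing and local finiteness estimates of \cite[\S 6.3]{CC16} for $F_{N_i}(x, \eps_i)$, ensuring that enough elements survive the restriction $|bq|_\bZ > \eta 2^{-(i+1)}$ once $\eta$ is small enough.
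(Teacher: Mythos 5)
Your first stage (inheritance of the strictly-nested axioms from Proposition~\ref{Prop_CCfrac}) is fine and matches the paper. The gap is in the second stage, and it is not a bookkeeping issue but a wrong choice of upper bound for $\|q_k\theta\|_\bZ$.

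You bound $\|q_k\theta\|_\bZ \ll \wh{\lam}_1(x_k)\,q_k^{-1/n}$ using only $\theta\in B(x_k)$, which gives an estimate in terms of $q_k$. The paper instead uses that each $x_k$ is a best approximate to $\theta$, so by \cite[Lemma 4.3(iii)]{CC16} one has $\|q_k\theta-p_k\|<2\|q_k\wh{x}_{k+1}-p_k\|$, and then the $\zeta$-set definition gives $\|q_k\wh{x}_{k+1}-p_k\|_2=\frac{|x_k\wedge x_{k+1}|}{|x_{k+1}|}<\eps_k\,|x_{k+1}|^{-1/n}$. The crucial point is that the sharp bound involves $q_{k+1}$, not $q_k$. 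Your version is weaker by exactly the factor $(q_k/q_{k+1})^{1/n}$ that shows up on the right-hand side of your displayed inequality, and that factor is not under control: in this construction $|x_{k+1}|/|x_k|\asymp(\|\al\|_2|x_k|/\eps_k)^{n/(n-1)}/|x_k|\gg |x_k|^{1/(n-1)}\eps_k^{-n/(n-1)}$, so $q_{k+1}/q_k$ grows far faster than any $2^{k(n-1)}$ (it compounds roughly double-exponentially). Hence your assertion that the right-hand side ``decays at most like $2^{-k(1+1/n)}$ up to constants'' is false, and the desired inequality $\wh{\lam}_1(x_k)\ll C\,2^{-k(1+1/n)}(q_k/q_{k+1})^{1/n}$ fails for large $k$. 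The phrase ``suitable control of $q_{k+1}/q_k$ inherited from the $\zeta$-set definition'' is exactly where the argument breaks: the $\zeta$-set definition does not bound this ratio; it forces it to be large.

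Two secondary remarks. First, your interval cutoffs $X_k=q_k/|bq_k|_\bZ$ need not be monotone, since $|bq_k|_\bZ$ can oscillate; the paper's cutoffs $\eta^{-1}2^k|x_k|$ are visibly increasing and avoid this. With the correct bound in terms of $|x_{k+1}|$, the paper's choice also makes the $X^{-1/n}$ and $|x_{k+1}|^{-1/n}$ factors cancel cleanly, leaving only $\eps_k(\eta^{-1}2^k)^{1+1/n}$, which tends to $0$ because $\eps_k=\eta 4^{-k}$ and $1+1/n<2$. Second, your worry about non-emptiness of $\wt{F}_{N_i}(x,\eps_i)$ is a legitimate point to raise; the paper handles it implicitly through the counting estimate $\card\zeta(x,\al,\eps_i,b)\geq\tfrac12\card\zeta(x,\al,\eps_i)$ in Proposition~\ref{Prop_lam_est}, not by the density heuristic you sketch.
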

\begin{proof}
It follows from Proposition \ref{Prop_CCfrac} that $(Q',\sigma',B)$ is a strictly nested self-similar structure.
For each $\sigma'$-admissible sequence $(x_i)$, since $\bigcap_i B(x_i)$ is a single point $\theta$, it is enough to show that $\theta \in \Sing(b)$. 

For any large enough $X\geq 1$, there exists $i\geq 1$ such that $\eta^{-1}2^i |x_i| < X \leq \eta^{-1}2^{i+1}|x_{i+1}|$.
As in the proof of \cite[Proposition 6.6]{CC16}, \cite[Lemma 4.2]{CC16} implies that each $x_i=(p_i,q_i)$ is a best approximate to $\theta$ (see \cite[Subsection 4.1]{CC16} for the definition).
Hence, it follows from \cite[Lemma 4.3(iii)]{CC16} that
\[
\|q_i \theta -p_i\| < 2\|q_i \wh{x}_{i+1}-p_i\|.
\]
Since $x_{i+1}\in \wt{F}_{N_{i}}(x_i,\eps_i)$, we have $|x_{i+1}|>(\frac{|x_i \wedge x_{i+1}|}{\eps_i})^{\frac{n}{n-1}}$, hence
\[
\|q_i \wh{x}_{i+1}-p_i\|_2 = \frac{|x_i\wedge x_{i+1}|}{|x_{i+1}|}<\eps_i  |x_{i+1}|^{-\frac{1}{n}}.
\]
It follows from $\|\cdot\| \leq \|\cdot\|_2$ that
\[
\|q_i \theta -p_i\| <2\eps_i  |x_{i+1}|^{-\frac{1}{n}} \leq 2^{1+\frac{1}{n}} \eps_i (\eta^{-1}2^i)^{\frac{1}{n}}X^{-\frac{1}{n}} 
\]
Since $x_i \in \wt{F}_{N_{i-1}}(x_{i-1},\eps_{i-1})$, we have $|bq_i|_\bZ >\eta 2^{-i}$, hence
\[
\|q_i \theta -p_i\| < 2^{1+\frac{1}{n}} \eps_i (\eta^{-1}2^{i})^{1+\frac{1}{n}}|bq_i|_\bZ X^{-\frac{1}{n}}\quad
\text{and}\quad |q_i|<\eta 2^{-i}X<|bq_i|_\bZ X.
\] 
Since $\eps_i=\eta 4^{-i}$ and $n\geq 2$, $\eps_i 2^{(1+\frac{1}{n})i}\to 0$ as $i\to \infty$. Therefore, $\theta \in \Sing(b)$.
\end{proof}

As in the proof of \cite[Proposition 6.13]{CC16}, using \cite[Lemmas 6.8 and 6.9]{CC16}, we can find constants $c_\rho>0$ and $c>0$ such that the function $\rho:Q'\to (0,1)$ given by 
\eqlabel{Eq_def_rho}{
\rho(x)=c_\rho \left(\frac{1}{N_i}\right)^{\frac{n+1}{n-1}}\eps_i^{\frac{2n}{n-1}+n} \quad \text{for each } x\in Q_i
}
has the property that for any $y \in \wt{F}_{N_i}(x,\eps_i)$ there are at most $c$ points $z\in \wt{F}_{N_i}(x,\eps_i)\setminus\{y\}$ such that $d(B(y),B(z))\leq \rho(x)\diam B(x)$.
Moreover, since $\rho(x)$ decreases with $i$, we have
\[
\rho(y)\diam B(y) < \rho(x) \diam B(x)
\] for all $y\in \sigma'(x)$ when $\eps_i$ is small enough. Therefore, the conditions \eqref{item_wlower_1} and \eqref{item_wlower_3} of Theorem \ref{thm:CCLow3.6} hold.

In order to show the condition \eqref{item_wlower_2} of Theorem \ref{thm:CCLow3.6}, we need the following proposition, which is a modification of \cite[Propsition 6.10]{CC16}.
\begin{prop}\label{Prop_lam_est}
    If $\eps_i$ is small enough, then for each $x\in Q_i$ and all real numbers $s$ and $t$ in $[0,2n]$, we have
    \[
    \sum_{y\in \wt{F}_{N_i}(x,\eps_i)} \frac{\wh{\lam}_1(y)^s}{|y|^t}\gg S_1(N_i,t)\frac{\eps_i^{s+(t-1)\frac{n}{n-1}}}{\wh{\lam}_n(x)^{(t-n)\frac{n}{n-1}}|x|^t},
    \]
    where
    \[
    S_1(N_i,t)=\sum_{k=1}^{N_i}\frac{1}{k^{1+(t-n)\frac{n}{n-1}}}\frac{\card(\cC_k'(x)\cap \Lam_x(\eps_i))}{\card(\cC_k'(x)\cap\Lam_x)}.
    \]
\end{prop}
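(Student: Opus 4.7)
The plan is to follow the proof of the unrestricted analog \cite[Proposition 6.10]{CC16} and insert an additional density estimate that absorbs the extra constraint $|bq|_\bZ>\eta 2^{-(i+1)}$ distinguishing $\wt F_{N_i}$ from $F_{N_i}$.

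First, I would decompose the sum by primitive directions:
\[
\sum_{y\in \wt F_{N_i}(x,\eps_i)}\frac{\wh\lam_1(y)^s}{|y|^t}=\sum_{\al\in \Lam_x(\eps_i)\cap \cC_{N_i}(x)}\ \sum_{\substack{y\in \zeta(x,\al,\eps_i)\\ |bq(y)|_\bZ>\eta 2^{-(i+1)}}}\frac{\wh\lam_1(y)^s}{|y|^t}.
\]
As in \cite{CC16}, on each $\zeta(x,\al,\eps_i)$ both $|y|$ and $\wh\lam_1(y)$ are comparable to explicit quantities depending only on $x,\al,\eps_i$, so each inner sum is comparable to $\card(\zeta(x,\al,\eps_i)\cap \wt F_{N_i}(x,\eps_i))$ times a common factor. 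The proposition therefore reduces to the cardinality estimate
\[
\card\bigl(\zeta(x,\al,\eps_i)\cap \wt F_{N_i}(x,\eps_i)\bigr)\gg \card\,\zeta(x,\al,\eps_i)
\]
for the $\al$'s producing the main term of $S_1(N_i,t)$, with implied constant depending only on $b$ and $\eta$.

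To establish the cardinality bound, I would use that $\pi_x^{-1}(\al)\cap \bZ^{n+1}=y_\al+\bZ\cdot x$ for a fixed representative $y_\al=(p_\al,q_\al)$, so $\zeta(x,\al,\eps_i)$ is parametrized (modulo the primitivity condition defining $Q$) by an interval of integers $K_\al\subset \bZ$ of length $\asymp (|x|\|\al\|_2)^{n/(n-1)}\eps_i^{-n/(n-1)}|x|^{-1}$. Under this parametrization the constraint $|bq|_\bZ>\eta 2^{-(i+1)}$ becomes $|b(q_\al+k|x|)|_\bZ>\eta 2^{-(i+1)}$ in $k$. In the rational case $b=c/d$ with $\gcd(c,d)=1$ and $d\geq 2$, for $i$ large enough that $\eta 2^{-(i+1)}<1/d$ the forbidden $k$ form at most an arithmetic progression of density $\gcd(c|x|,d)/d\leq 1-1/d$, so a fraction $\geq 1/d$ of $K_\al$ survives. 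In the irrational case, Weyl equidistribution of $\{b|x|k\}$ modulo $1$ forces the forbidden $k$ to have relative density at most $4\eta 2^{-(i+1)}$ plus a discrepancy term that is $o(1)$ as $|K_\al|\to\infty$, hence at most $1/2$ once $\eps_i$ is small enough.

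The main obstacle will be the pathological rational sub-case $d\mid |x|$ and $d\mid q_\al$, in which the entire $K_\al$ is forbidden, together with analogous $\al$'s for which $|K_\al|$ is too small to absorb the discrepancy in the irrational case. I would handle these by isolating them from the main contribution: the pathological $\al$'s form a proper sublattice of $\Lam_x$ of index $\geq d$, so their aggregate contribution to $S_1(N_i,t)$ is at most a fixed fraction of the whole and the complementary $\al$'s alone still produce a lower bound of the same order. Once these exceptional directions are removed and the cardinality estimate is verified on the bulk, the remainder of the argument proceeds verbatim as in the proof of \cite[Proposition 6.10]{CC16}.
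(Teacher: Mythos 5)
Your decomposition by primitive directions $\al$, the reduction to the cardinality comparison $\card(\zeta(x,\al,\eps_i,b))\asymp\card(\zeta(x,\al,\eps_i))$, and the parametrization of $\pi_x^{-1}(\al)\cap Q$ by translates of $x$ all match the paper exactly. The divergence --- and the gap --- is in how you try to establish the cardinality comparison.

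The paper's argument uses a structural fact that you never invoke: since $x=(r,s)\in Q_i$ was itself produced at the previous stage of the recursive construction, it satisfies $|bs|_\bZ>\eta 2^{-i}$. The threshold for the children is the strictly smaller $\eta 2^{-(i+1)}$, so if some $\ell$ is ``forbidden'', i.e.\ $|b(q+\ell s)|_\bZ\leq\eta 2^{-(i+1)}$, then the triangle inequality gives $|b(q+(\ell\pm1)s)|_\bZ\geq |bs|_\bZ - |b(q+\ell s)|_\bZ>\eta 2^{-i}-\eta2^{-(i+1)}=\eta2^{-(i+1)}$, so consecutive $\ell$'s cannot both be forbidden. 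This yields $\card(\zeta(x,\al,\eps_i,b))\geq\frac{1}{2}\card(\zeta(x,\al,\eps_i))$ for \emph{every} $\al$, with no case distinction between rational and irrational $b$, no equidistribution, and no exceptional directions; the only thing that remains to check is that the interval of admissible $\ell$'s has length $\gg\eps_i^{-n/(n-1)}\gg 1$, which is automatic for $\eps_i$ small.

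Your replacement of this one-line argument by Weyl equidistribution (irrational $b$) and arithmetic-progression density (rational $b$) introduces problems that you partially acknowledge but do not actually resolve. In the irrational case, the discrepancy is $o(1)$ only as $|K_\al|\to\infty$, and $|K_\al|$ varies with $\al$; ``at most $1/2$ once $\eps_i$ is small enough'' does not follow from what you write, because for any fixed $\eps_i$ there may be $\al$'s with $|K_\al|$ of moderate size where the discrepancy is not small. In the rational case, the ``pathological'' subcase $d\mid|x|$ (i.e.\ $d\mid s$) that you isolate in fact cannot occur, precisely because $x\in Q_i$ forces $|bs|_\bZ>\eta 2^{-i}>0$; treating it as a real possibility and proposing to discard a ``proper sublattice of index $\geq d$'' both adds unnecessary machinery and obscures the point. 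The claim that the complementary $\al$'s ``still produce a lower bound of the same order'' is asserted rather than proved, and would require redoing the counting estimates of \cite[Prop.~6.10]{CC16} over a restricted set of directions, which is a nontrivial additional step. In short: the proposal misses the recursive constraint on the parent $x$ that makes the cardinality comparison uniform and elementary, and the workarounds you propose in its place leave real gaps.
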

\begin{proof}
    Observe that 
    \[\begin{split}
    \wt{F}_{N_i}(x,\eps_i)&=\left\{y=(p,q)\in F_{N_i}(x,\eps_i):|bq|_\bZ >\eta 2^{-(i+1)}\right\}\\
    &=\bigcup_{\al\in\Lam_x(\eps_i)\cap \cC_{N_i}(x)} \zeta(x,\al,\eps_i,b),
    \end{split}\]
    where 
    $\zeta(x,\al,\eps_i,b)=\{y=(p,q)\in \zeta(x,\al,\eps_i): |bq|_\bZ >\eta 2^{-(i+1)}\}.$ Therefore, it is enough to show 
    \eqlabel{eq_zeta_count}{
    \card \zeta(x,\al,\eps_i,b) \asymp \card \zeta(x,\al,\eps_i).
    }
    Then, the same proof of \cite[Propsition 6.10]{CC16} works in this case.
    
    In order to prove \eqref{eq_zeta_count}, write $x=(r,s)$ and fix a point $y=(p,q) \in \pi_x^{-1}(\al)\cap Q$. Since $\gcd(r,s)=1$, we have 
    $$\pi_x^{-1}(\al)\cap Q = \{(p + \ell r, q + \ell s) \cap Q: \ell \in \bZ\}.$$ 
    Since $|x\wedge y| = \|\al\|_2 |x|$, 
    \[
    \card \zeta(x,\al,\eps_i) = \card \left\{\ell \in \bZ : 
    (p+ \ell r, q + \ell s) \in Q, 
    \ga < q +\ell s < 2 \ga \right\},
    \] where $\ga=\ga_{x,\al,\eps_i} =\left(\frac{\|\al\|_2 |x|}{\eps_i}\right)^{\frac{n}{n-1}}$.
    It follows from the primitivity of $\al$ that $\gcd(p+\ell r, q+\ell s)=1$ for any $\ell \in\bZ$. 
    Hence, we have
    \[\begin{split}
     \card\zeta(x,\al,\eps_i) &= \card \left\{\ell \in \bZ : 
    \ga < q +\ell s < 2 \ga \right\};\\
    \card\zeta(x,\al,\eps_i,b) &=  \card\left\{\ell \in \bZ : 
    \ga < q +\ell s < 2 \ga, |b(q+\ell s)|_\bZ>\eta 2^{-(i+1)} \right\}.
    \end{split}\] 
    If $\ell\in \bZ$ is such that $\ga<q+\ell s<2\ga$ but $|b(q+\ell s)|_\bZ\leq \eta 2^{-(i+1)}$, then it follows from $x\in Q_i$, hence $|bs|_\bZ>\eta 2^{-i}$, that
    \[
    |b(q+(\ell\pm 1)s)|_\bZ \geq |bs|_\bZ - |b(q+\ell s)|_\bZ >\eta 2^{-i}-\eta2^{-(i+1)}=\eta 2^{-(i+1)}.
    \]
    Therefore, we have
    \[
     \frac{1}{2}\card \zeta(x,\al,\eps_i) \leq \card \zeta(x,\al,\eps_i,b) \leq \card \zeta(x,\al,\eps_i).
    \]
    This proves \eqref{eq_zeta_count}.
\end{proof}
We are now ready for the proof of \eqref{Eq_dim_sing} of Theorem \ref{Thm_Sec3}.
\begin{proof}[Proof of \eqref{Eq_dim_sing} of Theorem \ref{Thm_Sec3}]
We will prove 
\[
\dim_H \Sing(b) \geq \frac{n^2}{n-1}-\del.
\] Since $\del$ is arbitrary, this implies \eqref{Eq_dim_sing} of Theorem \ref{Thm_Sec3}. 
From the above observations and Proposition \ref{Prop_fractal_sing}, it is enough to show the condition \eqref{item_wlower_2} with $s=\frac{n^2}{n+1}-\del$ of Theorem \ref{thm:CCLow3.6} about $(Q',\sigma',B)$. 

First, it follows from Proposition~\ref{Prop_CC16_6.11} with $t=\frac{n+1}{n}s$, $\eps=\eps_i$, and $N=N_i$ that if $\eps_i$ is small enough, then
\eqlabel{Eq_S1_sum}{
S_1(N_i,t)\gg \sum_{k=1}^{N_i}\frac{1}{k^{1+(t-n)\frac{n}{n-1}}}.
} 

Now, we fix any $x\in Q_i$ and it follows from the proof of \cite[Corollary 6.12]{CC16} that $\wh{\lam}_1(x)\asymp\eps_{i-1}$ and $\wh{\lam}_n(x)^{-1}\gg \eps_{i-1}^{n-1}$.
Combining Proposition \ref{Prop_lam_est} and \eqref{Eq_S1_sum} with $t=\frac{n+1}{n}s=n-\frac{n+1}{n}\del$, together with the definitions of $\eps_i$, $N_i$, and $\rho$ from \eqref{Eq_def_epsN} and \eqref{Eq_def_rho}, we obtain
\[\begin{split}
\sum_{y\in \wt{F}_{N_i}(x,\eps_i)}& \frac{(\rho(y)\diam B(y))^s}{(\rho(x)\diam B(x))^s}\\
&=\left(\frac{N_i}{N_{i+1}}\right)^{\frac{n+1}{n-1}s}\left(\frac{\eps_{i+1}}{\eps_i}\right)^{(\frac{2n}{n-1}+n)s} \frac{|x|^{t}}{\wh{\lam}_1(x)^s} \sum_{y\in \wt{F}_{N_i}(x,\eps_i)} \frac{\wh{\lam}_1(y)^s}{|y|^t}\\
&\gg \eps_i^{s+(t-1)\frac{n}{n-1}}\eps_{i-1}^{(t-n)n-s} \sum_{k=1}^{N_i}\frac{1}{k^{1+(t-n)\frac{n}{n-1}}}\\
&\gg  4^{-i\left(n-\frac{n+1}{n-1}\del -(n+1)\del\right)}5^{i\frac{n(n+1)}{n-1}}\\
&\gg (5/4)^{in}.
\end{split}\]
Therefore, if $i$ is large enough, then the sum exceeds $c+1$. This proves the condition \eqref{item_wlower_2} with $s=\frac{n^2}{n+1}-\del$ of Theorem \ref{thm:CCLow3.6}. 
\end{proof}

\subsection{Fractal structure for $\DI_\eps (b)$}
Now we assume that $b\in\bQ\setminus\bZ$. Similar to the previous subsection, we will construct a certain fractal structure for $\DI_\eps (b)$ and prove \eqref{Eq_dim_DI} of Theorem \ref{Thm_Sec3}. 

Let $b=c/d$ with $\gcd(c,d)=1$ and $d\geq 2$. The key property we will use is the following: for any $k\in\bZ$,
\eqlabel{Eq_b_prop}{
|bk|_\bZ < 1/d \quad \iff \quad |bk|_\bZ =0. 
}
Given $\eps>0$, $x\in Q$, and $N\in\bN$, define
\[
F_{N,b} (x,\eps) = \{y=(p,q)\in F_N(x,\eps): |bq|_\bZ \geq 1/d\},
\]
and define 
\[
\sigma_{\eps,N,b}(x)=F_{N,b}(x,\eps),\quad 
Q_{\eps,N,b}=\bigcup_{x\in Q}\sigma_{\eps,N,b}(x),\quad 
B(x)=B\left(\wh{x},\frac{\lam_1(x)}{2|x|}\right).
\]
Similar to Proposition \ref{Prop_fractal_sing}, we have
\begin{prop}\label{Prop_fractal}
 The triple $(Q_{\eps,N,b},\sigma_{\eps,N,b},B)$ is a strictly nested self-similar structure covering a subset of $\DI_{2d^{1+1/n}\eps}(b)$ provided $\eps$ is small enough.
\end{prop}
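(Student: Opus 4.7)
The plan is to recycle the strategy of Proposition \ref{Prop_fractal_sing} with $\eps$ now fixed rather than shrinking along the sequence, using the rational jump property \eqref{Eq_b_prop} to supply a uniform lower bound $|bq|_\bZ \geq 1/d$ along every admissible sequence. The strict-nesting self-similar property of $(Q_{\eps,N,b},\sigma_{\eps,N,b},B)$ transfers directly from $(Q_{\eps,N},\sigma_{\eps,N},B)$: indeed $\sigma_{\eps,N,b}(x) = F_{N,b}(x,\eps) \subseteq F_N(x,\eps) = \sigma_{\eps,N}(x)$ and $B$ is unchanged, so each $\sigma_{\eps,N,b}$-admissible sequence is $\sigma_{\eps,N}$-admissible, and the three structural axioms (finiteness of $\sigma(x)$; nested compact $B(y)\subset B(x)$; strictly decreasing vanishing diameters) are inherited from Proposition \ref{Prop_CCfrac}, provided $\eps$ is small enough.

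For the covering property, fix any $\sigma_{\eps,N,b}$-admissible sequence $(x_i)_{i\geq 0}$ with $x_i = (p_i,q_i)$, and let $\{\theta\} = \bigcap_i B(x_i)$ (a single point in $\bR^n$ by Proposition \ref{Prop_CCfrac}). Since every $x_i$ belongs to $Q_{\eps,N,b} = \bigcup_{x\in Q} F_{N,b}(x,\eps)$, we have $|bq_i|_\bZ \geq 1/d$ for all $i$. For each large $X$, pick $i$ maximal with $|x_i| \leq X/d$, so that $|x_{i+1}| > X/d$. By \cite[Lemmas 4.2 and 4.3(iii)]{CC16} each $x_i$ is a best approximate to $\theta$ and $\|q_i\theta - p_i\| \leq 2\|q_i\wh{x}_{i+1} - p_i\|$; since $x_{i+1}\in F_N(x_i,\eps)$ forces $\|q_i\wh{x}_{i+1} - p_i\|_2 < \eps|x_{i+1}|^{-1/n}$, this yields
\[
\|q_i\theta\|_\bZ < 2\eps|x_{i+1}|^{-1/n} < 2d^{1/n}\eps X^{-1/n}.
\]
Using $d|bq_i|_\bZ \geq 1$ the right-hand side is bounded by $2d^{1+1/n}\eps|bq_i|_\bZ X^{-1/n}$; meanwhile $|q_i| = |x_i| \leq X/d \leq |bq_i|_\bZ X$. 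Hence $y = q_i$ is the required integer solution for every sufficiently large $X$, showing $\theta \in \DI_{2d^{1+1/n}\eps}(b)$.

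The essential distinction from Proposition \ref{Prop_fractal_sing} is that with $\eps$ fixed the bound $\|q_i\theta\|_\bZ < 2\eps|x_{i+1}|^{-1/n}$ no longer decays along the sequence; the vanishing slack previously supplied by $\eps_i 2^{(1+1/n)i}\to 0$ is replaced by the rigid uniform gap $|bq|_\bZ \geq 1/d$ extracted from the arithmetic dichotomy \eqref{Eq_b_prop}, which is exactly the input that upgrades singularity for $b$ to $\eps$-Dirichlet improvability for $b$. I do not anticipate any real obstacle beyond identifying the correct matching scale $X \asymp d|x_i|$; once this window is chosen, the two defining inequalities of $\DI_{2d^{1+1/n}\eps}(b)$ fall out simultaneously, and all remaining verifications are bookkeeping that parallels the proof of Proposition \ref{Prop_fractal_sing}.
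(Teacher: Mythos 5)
Your proposal is correct and follows essentially the same route as the paper: the structural axioms are inherited directly from Proposition \ref{Prop_CCfrac} since $\sigma_{\eps,N,b}(x)\subseteq\sigma_{\eps,N}(x)$, and the covering estimate repeats the argument of Proposition \ref{Prop_fractal_sing} with the fixed lower bound $|bq_i|_\bZ\geq 1/d$ replacing the shrinking sequences. One small bookkeeping point: you choose $i$ with $|x_i|\leq X/d<|x_{i+1}|$, which only gives $|q_i|\leq|bq_i|_\bZ X$ rather than the strict inequality in the definition of $\DI_\eps(b)$; the paper instead takes $|x_i|<X/d\leq|x_{i+1}|$, which yields both required strict inequalities directly, so just swap which side of your window is open.
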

\begin{proof}
It follows from Proposition \ref{Prop_CCfrac} that $(Q_{\eps,N,b},\sigma_{\eps,N,b},B)$ is a strictly nested self-similar structure.
For each $\sigma_{\eps,N,b}$-admissible sequence $(x_i)$, since $\bigcap_i B(x_i)$ is a single point $\theta$, it is enough to show that $\theta \in \DI_{2\eps}(b)$. 

For any large enough $X\geq 1$, there exists $i\geq 1$ such that $d|x_i| <  X \leq d|x_{i+1}|$.
Following the proof of Proposition \ref{Prop_fractal_sing}, if we write $x_i=(p_i,q_i)$, it follows from $x_{i+1}\in F_{N,b}(x_i,\eps)$ that
\[
\|q_i \theta -p_i\| <2\eps  |x_{i+1}|^{-\frac{1}{n}} \leq 2\eps d^{\frac{1}{n}} X^{-\frac{1}{n}}. 
\]
Since $x_i \in F_{N,b}(x_{i-1},\eps)$, we have $|bq_i|_\bZ \geq 1/d$, hence
\[
\|q_i \theta -p_i\| < 2 \eps d^{1+\frac{1}{n}}|bq_i|_\bZ X^{-\frac{1}{n}}\quad
\text{and}\quad |q_i|<\frac{1}{d}X\leq |bq_i|_\bZ X.
\] 
Therefore, $\theta \in \DI_{2d^{1+1/n}\eps}(b)$.
\end{proof}

As before, in order to show the condition \eqref{item_wlower_2} of Theorem \ref{thm:CCLow3.6}, we need the following proposition.

\begin{prop}\label{Prop_count}
If $\eps$ is small enough, then for any $N\in\bN$, each $x\in Q_{\eps,N,b}$, and all real numbers $s$ and $t$ in $[0,2n]$, we have
\[
\sum_{y\in F_{N,b}(x,\eps)}\frac{\wh{\lam}_1(y)^s}{|y|^t}\gg S_1(N,t)\frac{\eps^{s+(t-1)\frac{n}{n-1}}}{\wh{\lam}_n(x)^{(t-n)\frac{n}{n-1}}|x|^t},
\]
where
\[
S_1(N,t) = \sum_{k=1}^N \frac{1}{k^{1+(t-n)\frac{n}{n-1}}}\frac{\card(\cC_k'(x)\cap\Lam_x(\eps))}{\card(\cC_k'(x)\cap\Lam_x)}.
\]
\end{prop}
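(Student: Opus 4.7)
The plan is to follow the proof strategy of Proposition \ref{Prop_lam_est} and reduce the estimate to \cite[Proposition 6.10]{CC16} by showing that the $b$-restricted counts $\card\zeta(x,\al,\eps,b)$ differ from the unrestricted counts $\card\zeta(x,\al,\eps)$ only by a bounded multiplicative constant. First I would write
\[
F_{N,b}(x,\eps) = \bigcup_{\al\in\Lam_x(\eps)\cap\cC_N(x)} \zeta(x,\al,\eps,b),\quad
\zeta(x,\al,\eps,b) := \{y=(p,q)\in\zeta(x,\al,\eps) : |bq|_\bZ \geq 1/d\},
\]
so the proof reduces, as in Proposition \ref{Prop_lam_est}, to establishing
\[
\card\zeta(x,\al,\eps,b) \asymp \card\zeta(x,\al,\eps).
\]

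Once this is done, plugging into the sum decomposition and copying the rest of the argument of \cite[Proposition 6.10]{CC16} gives the stated bound. The remaining content is therefore entirely concentrated in the counting comparison.

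For the counting comparison, I would write $x=(r,s)$ and fix $y_0=(p,q)\in\pi_x^{-1}(\al)\cap Q$, so that $\pi_x^{-1}(\al)\cap Q = \{(p+\ell r, q+\ell s) : \ell\in\bZ\}$, and primitivity of $\al$ makes the gcd condition automatic. Thus
\[
\card\zeta(x,\al,\eps) = \card\{\ell\in\bZ : \ga < q+\ell s < 2\ga\},
\]
with $\ga = (\|\al\|_2|x|/\eps)^{n/(n-1)}$, and $\card\zeta(x,\al,\eps,b)$ is the subset where additionally $|b(q+\ell s)|_\bZ \geq 1/d$. The crucial observation is that since $x\in Q_{\eps,N,b}$, by definition $x$ arose as an element of some $F_{N,b}(x',\eps)$, hence $|bs|_\bZ \geq 1/d$. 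Combining this with the key property \eqref{Eq_b_prop}, if $\ell$ is ``bad'', meaning $|b(q+\ell s)|_\bZ < 1/d$, then in fact $|b(q+\ell s)|_\bZ = 0$, and therefore
\[
|b(q+(\ell\pm 1)s)|_\bZ = |bs|_\bZ \geq 1/d,
\]
so the neighbors $\ell\pm 1$ are ``good''. Consequently bad $\ell$'s are isolated in $\bZ$, which yields $\card\zeta(x,\al,\eps,b) \geq \tfrac{1}{2}\card\zeta(x,\al,\eps)$, establishing the desired comparison.

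The only mildly delicate point is the input $|bs|_\bZ \geq 1/d$ on the second coordinate of $x$; this is what makes the dichotomy \eqref{Eq_b_prop} exploitable and is precisely why the fractal is defined as a subset of $Q_{\eps,N,b}$ rather than $Q$. Once this is in hand, the rest of the argument is a transcription of \cite[Proposition 6.10]{CC16} with $\zeta(x,\al,\eps)$ replaced by $\zeta(x,\al,\eps,b)$, and I do not expect any additional obstacle.
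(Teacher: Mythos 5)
Your proposal is correct and follows essentially the same route as the paper: the same decomposition of $F_{N,b}(x,\eps)$ into the fibers $\zeta(x,\al,\eps,b)$, the same reduction to the counting comparison $\card\zeta(x,\al,\eps,b)\asymp\card\zeta(x,\al,\eps)$, and the same ``bad $\ell$'s are isolated'' argument using \eqref{Eq_b_prop} together with $|bs|_\bZ\geq 1/d$ from $x\in Q_{\eps,N,b}$. The only cosmetic difference is that you state $|b(q+(\ell\pm 1)s)|_\bZ = |bs|_\bZ$ as an equality (which is correct since $b(q+\ell s)\in\bZ$), whereas the paper uses the triangle-inequality bound $\geq |bs|_\bZ - |b(q+\ell s)|_\bZ$; both yield the same conclusion.
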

\begin{proof}
Similar to the proof of Proposition \ref{Prop_lam_est}, since 
    \[\begin{split}
    F_{N,b}(x,\eps)&=\left\{y=(p,q)\in F_{N}(x,\eps):|bq|_\bZ \geq 1/d\right\}\\
    &=\bigcup_{\al\in\Lam_x(\eps)\cap \cC_{N}(x)} \zeta(x,\al,\eps,b),
    \end{split}\]
    where 
    $\zeta(x,\al,\eps,b)=\{y=(p,q)\in \zeta(x,\al,\eps): |bq|_\bZ \geq 1/d\},$ it is enough to show that
    \eqlabel{eq_zeta_count_2}{
    \card \zeta(x,\al,\eps,b) \asymp \card \zeta(x,\al,\eps)
    } for all small enough $\eps>0$.
    Then, the same proof of \cite[Propsition 6.10]{CC16} works in this case.

Following the proof of Proposition \ref{Prop_lam_est}, we have
    \[\begin{split}
     \card\zeta(x,\al,\eps) &= \card \left\{\ell \in \bZ : 
    \ga < q +\ell s < 2 \ga \right\};\\
    \card\zeta(x,\al,\eps,b) &=  \card\left\{\ell \in \bZ : 
    \ga < q +\ell s < 2 \ga, |b(q+\ell s)|_\bZ\geq 1/d \right\},
    \end{split}\] 
where $x=(r,s)$, $y=(p,q)\in\pi_x^{-1}(\al)\cap Q$, and $\ga=\ga_{x,\al,\eps} =\left(\frac{\|\al\|_2 |x|}{\eps}\right)^{\frac{n}{n-1}}$.
 If $\ell\in \bZ$ is such that $\ga<q+\ell s<2\ga$ but $|b(q+\ell s)|_\bZ< 1/d$, then $|b(q+\ell s)|_\bZ=0$ by \eqref{Eq_b_prop}. Thus, it follows from $x\in Q_{\eps,N,b}$, hence $|bs|_\bZ\geq 1/d$, that
    \[
    |b(q+(\ell\pm 1)s)|_\bZ \geq |bs|_\bZ - |b(q+\ell s)|_\bZ \geq 1/d.
    \]
    Therefore, we have
    \[
     \frac{1}{2}\card \zeta(x,\al,\eps) \leq \card \zeta(x,\al,\eps,b) \leq \card \zeta(x,\al,\eps).
    \]
    This proves \eqref{eq_zeta_count_2}.
\end{proof}

\begin{proof}[Proof of \eqref{Eq_dim_DI} of Theorem \ref{Thm_Sec3}]
Using Proposition \ref{Prop_fractal} and Proposition \ref{Prop_count}, \eqref{Eq_dim_DI} of Theorem \ref{Thm_Sec3} follows by the same proof of \cite[Corollary 6.12]{CC16}.
\end{proof}

\section{Dimension estimates for $\Bad_A(\infty)$}\label{sec4}
In this section, we will prove Theorem \ref{Thm_3}. It is well-known that $A\in M_{m,n}(\bR)$ is singular if and only if $^tA\in M_{n,m}(\bR)$ is singular. This can be proved easily from the following transference theorem.
\begin{thm}\cite[Theorem \rom{2} in Chapter \rom{5}]{Cas57}\label{thm_trasf}
Let $0<C<1\leq X$ be constants.
Suppose that there is $\mb{q}\in\bZ^n\setminus\{0\}$ such that
\[
\|A\mb{q}\|_\bZ \leq C \quad\text{and}\quad \|\mb{q}\| \leq X.
\]
Then there is $\mb{y}\in \bZ^m\setminus\{0\}$ such that
\[
\|^tA\mb{y}\|_\bZ \leq D \quad\text{and}\quad \|\mb{y}\|\leq U,
\] where 
\[
D=(m+n-1)X^{\frac{1-m}{m+n-1}}C^{\frac{m}{m+n-1}},\quad U =(m+n-1)X^{\frac{n}{m+n-1}}C^{\frac{1-n}{m+n-1}}.
\]
\end{thm}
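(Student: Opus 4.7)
The theorem is the classical Khintchine--Cassels transference inequality. My plan is to prove it by combining Minkowski's theorem on successive minima with Mahler's duality between a lattice and its dual.

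First, I encode the hypothesis geometrically. Let
\[
\Lam = \{(\mb{p} - A\mb{q},\, \mb{q}) : (\mb{p},\mb{q}) \in \bZ^m \times \bZ^n\} \subset \bR^{m+n}
\]
be the standard unimodular lattice attached to $A$, and let
\[
K = \{(\mb{u},\mb{v}) \in \bR^{m+n} : \|\mb{u}\| \leq C,\ \|\mb{v}\| \leq X\},
\]
a symmetric convex body of volume $2^{m+n} C^m X^n$. The hypothesis of the theorem rephrases as $\lam_1(K,\Lam) \leq 1$, where $\lam_i$ denotes the $i$th successive minimum. A direct calculation identifies the dual lattice as
\[
\Lam^* = \{(\mb{y},\,{}^{t}A\mb{y} + \mb{p}) : (\mb{y},\mb{p}) \in \bZ^m \times \bZ^n\},
\]
so any nonzero point of $\Lam^*$ with $\mb{y}$-part nonzero lying in a box of the shape $\{\|\mb{y}\| \leq U,\ \|\mb{z}\| \leq D\}$ delivers precisely the conclusion sought.

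Next, I apply Minkowski's second theorem to bound $\prod_i \lam_i(K,\Lam)$ by $C^{-m} X^{-n}$, and then invoke Mahler's transference $\lam_j(K,\Lam) \cdot \lam_{m+n+1-j}(K^\circ,\Lam^*) \asymp 1$ to turn the hypothesis $\lam_1(K,\Lam) \leq 1$ into control on the successive minima of $(K^\circ,\Lam^*)$. A direct computation gives
\[
K^\circ = \{(\mb{y},\mb{z}) : C\|\mb{y}\|_1 + X\|\mb{z}\|_1 \leq 1\},
\]
and the target box $\{\|\mb{y}\| \leq U,\ \|\mb{z}\| \leq D\}$ is contained in $\rho K^\circ$ exactly when $mCU + nXD \leq \rho$. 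I then select $U$ and $D$ so that this inclusion holds with $\rho$ controlled by the bound above, while simultaneously forcing $U^m D^n$ to be large enough for Minkowski's first theorem to produce a nonzero point of $\Lam^*$ in the box with $\mb{y} \ne \mb{0}$ (the nonvanishing of $\mb{y}$ is enforced because $D < 1$ under the formulas in the statement, so if $\mb{y} = \mb{0}$ the accompanying integer vector would have to vanish).

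Balancing these two constraints---box-inclusion into $\rho K^\circ$ and lattice-point existence via volume---yields the explicit exponents of the statement. The key algebraic identity to check is
\[
D^n U^m = (m+n-1)^{m+n} (X^n C^m)^{1/(m+n-1)},
\]
which reflects the $m+n-1$ simultaneous constraints (one degree of freedom already absorbed by the hypothesis). The main obstacle is precisely this optimization: trading an $\ell^1$-type polar bound for an $\ell^\infty$-box bound while preserving the nonzero-lattice-point requirement. This is where the factor $(m+n-1)$ and the fractional exponents $\tfrac{1-m}{m+n-1},\ \tfrac{n}{m+n-1},\ \tfrac{m}{m+n-1},\ \tfrac{1-n}{m+n-1}$ arise in a forced way.
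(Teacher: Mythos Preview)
First, note that the paper does not prove this statement at all: it is quoted verbatim from Cassels' book and used as a black box. So there is no ``paper's own proof'' to compare against; the question is whether your sketch actually establishes the theorem.

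Your setup (the unimodular lattice $\Lam$, its dual $\Lam^*$, and the identification of the conclusion with a nonzero dual lattice point in a box) is correct and standard. The gap is in the central step, where two things go wrong.

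\textbf{Wrong direction of inclusion.} You compute that the target box $B=\{\|\mb{y}\|\le U,\ \|\mb{z}\|\le D\}$ satisfies $B\subset\rho K^\circ$ when $mCU+nXD\le\rho$. But to find a point of $\Lam^*$ in $B$ from knowledge of $\lam_1(K^\circ,\Lam^*)$ you need the \emph{opposite} inclusion $\mu K^\circ\subset B$, which holds precisely when $\mu\le\min(CU,\,XD)$. With the stated $U,D$ one checks $CU=XD=(m{+}n{-}1)(C^mX^n)^{1/(m+n-1)}$, and that is the quantity you must compare to $\lam_1(K^\circ,\Lam^*)$.

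\textbf{Minkowski on $B$ does not use the hypothesis.} You propose to ``force $U^mD^n$ large enough for Minkowski's first theorem to produce a nonzero point of $\Lam^*$ in the box.'' But $U^mD^n=(m{+}n{-}1)^{m+n}(C^mX^n)^{1/(m+n-1)}$, and there is nothing in the hypothesis preventing $C^mX^n$ from being arbitrarily small (take $A\mb{q}\in\bZ^m$ exactly and let $C\to0$). So this volume argument can fail. The hypothesis $\lam_1(K,\Lam)\le1$ has to enter earlier: combined with the \emph{lower} bound in Minkowski's second theorem it gives $\lam_{m+n}(K,\Lam)\gg(C^mX^n)^{-1/(m+n-1)}$, and only then Mahler's inequality $\lam_1(K^\circ,\Lam^*)\ll\lam_{m+n}(K,\Lam)^{-1}$ yields $\lam_1(K^\circ,\Lam^*)\ll(C^mX^n)^{1/(m+n-1)}$. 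A point in that dilate of $K^\circ$ then lies in $B$ by the corrected inclusion.

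Even with this repair, you will not recover the sharp constant $m{+}n{-}1$: the Mahler inequality $\lam_j\lam_{m+n+1-j}^*\asymp1$ carries a factorial-type dimensional constant. The constant $m{+}n{-}1$ in Cassels' statement comes from his Theorem~I (Chapter~V), proved via the Laplace identity for compound bodies, not via successive minima of the polar body. If the exact constant matters to you, you must follow that route; if only the exponents matter (as they do for all uses in this paper), your corrected argument suffices.
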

We also obtain the following similar result for very singular matrices.
\begin{prop}\label{Prop_transf} For $A \in M_{m,n}(\bR)$,
$A$ is very singular if and only if ${^t}A$ is very singular.
\end{prop}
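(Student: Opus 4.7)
The plan is to derive a Khintchine-type transference inequality for the uniform exponent $\wh{w}$ directly from Theorem \ref{thm_trasf}. Note first that by symmetry (swapping the roles of $A$ and $^tA$, and hence the roles of $m$ and $n$), it suffices to prove one implication, say that $\wh{w}(A)>n/m$ implies $\wh{w}({^tA})>m/n$.

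Fix any $w'$ with $n/m < w' < \wh{w}(A)$. By definition of $\wh{w}(A)$, for every sufficiently large real $X\geq 1$ there exists $\mb{q}\in\bZ^n\setminus\{0\}$ with $\|\mb{q}\|\leq X$ and $\|A\mb{q}\|_\bZ\leq X^{-w'}$. I would then apply Theorem \ref{thm_trasf} with $C=X^{-w'}$ (which is less than $1$ for $X$ large) to obtain $\mb{y}\in\bZ^m\setminus\{0\}$ satisfying
\[
\|{^tA}\mb{y}\|_\bZ\leq D,\qquad \|\mb{y}\|\leq U,
\]
where $D\asymp X^{(1-m-mw')/(m+n-1)}$ and $U\asymp X^{(n+(n-1)w')/(m+n-1)}$.

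The next step is to reparametrize in terms of $U$. Since the exponent of $X$ in $U$ is strictly positive, $U$ is a continuous increasing function of $X$ tending to infinity, so every sufficiently large target height $Y$ arises as $U(X)$ for some large $X$. Expressing $D$ as a power of $U$ yields
\[
D\ll U^{-w''},\qquad w''=\frac{m(1+w')-1}{n+(n-1)w'}.
\]
A short algebraic computation shows that $w''>m/n$ if and only if $w'>n/m$, which holds by the choice of $w'$. Absorbing the implicit constant into a slight decrease of the exponent (taking any $w'''\in(m/n, w'')$) gives $\|{^tA}\mb{y}\|_\bZ\leq U^{-w'''}$ with $0<\|\mb{y}\|\leq U$ for all sufficiently large $U$, hence $\wh{w}({^tA})\geq w'''>m/n$, so $^tA$ is very singular.

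The proof is then essentially bookkeeping; the only real point to check carefully is the monotone change of variables from $X$ to $U$ together with the verification that the constants $(m+n-1)^{1+w''}$ appearing from Theorem \ref{thm_trasf} can be absorbed into an arbitrarily small decrement of $w''$. The mild obstacle, and the reason for choosing $w'$ strictly larger than $n/m$ rather than just using $\wh{w}(A)$ itself, is that the inequality $w''>m/n$ is strict precisely when $w'>n/m$ is strict, so one needs to verify that the gap $\wh{w}(A)-n/m>0$ translates to a gap $w'''-m/n>0$ with room to spare.
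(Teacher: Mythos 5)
Your proposal is correct and takes essentially the same route as the paper: both apply Theorem \ref{thm_trasf} with $C=X^{-w'}$ (the paper writes $w'=n/m+\delta$), both reparametrize the conclusion in terms of the new height $U$, and both observe that the resulting exponent strictly exceeds $m/n$ precisely because $w'>n/m$ strictly, absorbing the multiplicative constant $(m+n-1)$ into a small decrement of the exponent. Your explicit formula $w''=\frac{m(1+w')-1}{n+(n-1)w'}$ and the verification that $w''>m/n \iff w'>n/m$ are a slightly tidier way of packaging the same computation.
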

\begin{proof}
It is enough to show one direction by the symmetry of the argument. Assume $A$ is very singular, that is, $\wh{w}(A)>\frac{n}{m}$. 
Choose $\del>0$ such that $\wh{w}(A)>\frac{n}{m}+\del$, that is, for all sufficiently large $X$ there is $\mb{q}\in\bZ^n$ such that
\[
\|A\mb{q}\|_\bZ <X^{-\frac{n}{m}-\del}\quad\text{and}\quad 0<\|\mb{q}\|<X.
\]
Using Theorem \ref{thm_trasf}, there is $\mb{y}\in\bZ^m$ such that
\[\begin{split}
\|^tA\mb{y}\|_\bZ &\leq (m+n-1)X^{\frac{1-m}{m+n-1}-\frac{n}{m+n-1}-\del\frac{m}{m+n-1}}=(m+n-1)X^{-1-\del\frac{m}{m+n-1}};\\ 
\|\mb{y}\|&\leq (m+n-1)X^{\frac{n}{m+n-1}-\frac{n}{m}\frac{1-n}{m+n-1}-\del\frac{1-n}{m+n-1}}=(m+n-1)X^{\frac{n}{m}+\del\frac{n-1}{m+n-1}}. 
\end{split}
\]
By taking $Y=(m+n)X^{\frac{n}{m}+\del\frac{n-1}{m+n-1}}$, we have
\[
\|^tA \mb{y}\|_\bZ < Y^{-\frac{m}{n}-\del'}\quad \text{and}\quad 0<\|\mb{y}\|<Y
\] for some small $\del'>0$. This proves that $^t A$ is very singular.
\end{proof}

Now we recall the result in \cite{Kim}. Assume that the subgroup ${^{t}A}\bZ^m + \bZ^n$ of $\bR^n$ has maximal rank $m+n$ over $\bZ$. Following \cite[Section 3]{BL05}, there exists a sequence of best approximations $(\mb{y}_k)_{k\geq 1}$ in $\bZ^m$ for $^t A$. Denote $Y_k = \|\mb{y}_k\|$, $M_k = \|{^t A}\mb{y}_k\|_\bZ$, and
\[
\gamma_k = \max\left(\left(Y_{k}^{\frac{m}{n}}M_{k-1}\right)^{\frac{n}{m+n}}, \left(Y_{k+1}^{\frac{m}{n}}M_{k}\right)^{\frac{n}{m+n}}\right).
\]
For any $\al>0$, let
\[
B_\al(\{\gamma_k\}) = \{\mb{b}\in[0,1]^m : |\mb{b}\cdot\mb{y}_k|_\bZ > \al \ga_k \text{ for all large enough }k\geq 2\}.
\]
\begin{prop}\cite[Proposition 5.1]{Kim}\label{Prop_subset} For any $\al>0$,
\[B_\al(\{\gamma_k\})\subset \Bad_A\left(\frac{\al-n}{m}\right).\]
\end{prop}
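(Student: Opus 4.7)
My plan is to argue by contradiction using the necessary direction of the transference principle, Theorem~\ref{Thm_TP}\eqref{Item_Nec}, applied to the sequence of best approximations $(\mb{y}_k)$. Assume $\mb{b}\in B_\al(\{\gamma_k\})$, and suppose for contradiction that $(A,\mb{b})\notin\Bad_A((\al-n)/m)$. Then there exists $\mb{q}\in\bZ^n$ with $X:=\|\mb{q}\|$ arbitrarily large and $C:=\|A\mb{q}-\mb{b}\|_\bZ$ satisfying $CX^{n/m}<(\al-n)/m$. Theorem~\ref{Thm_TP}\eqref{Item_Nec} then gives
$$
|\mb{b}\cdot\mb{y}|_\bZ\le(m+n)\max\bigl(X\|{}^tA\mb{y}\|_\bZ,\,C\|\mb{y}\|\bigr)\quad\text{for every }\mb{y}\in\bZ^m,
$$
and specializing $\mb{y}=\mb{y}_k$ for $k$ large enough to invoke the $B_\al(\{\gamma_k\})$ hypothesis yields $\al\gamma_k<(m+n)\max(XM_k,CY_k)$. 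The task reduces to choosing an index $k=k(\mb{q})$ for which this inequality is incompatible with the explicit definition of $\gamma_k$.

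The correct choice is the transition index $k^\ast$ determined by $(X,C)$: the largest $k$ with $XM_k\ge CY_k$. Since $M_k\downarrow 0$ and $Y_k\uparrow\infty$, $k^\ast$ exists once $X/C$ is sufficiently large, and $k^\ast\to\infty$ as $\|\mb{q}\|\to\infty$, so the $B_\al$-hypothesis applies at both $\mb{y}_{k^\ast}$ and $\mb{y}_{k^\ast+1}$. By construction $\max(XM_{k^\ast},CY_{k^\ast})=XM_{k^\ast}$, while $\max(XM_{k^\ast+1},CY_{k^\ast+1})=CY_{k^\ast+1}$. Writing $\mu_k:=(Y_{k+1}^{m/n}M_k)^{n/(m+n)}$, the two branches of the max in the definition of $\gamma_k$ give $\gamma_{k^\ast}\ge\mu_{k^\ast}$ and $\gamma_{k^\ast+1}\ge\mu_{k^\ast}$, so the transference bound at these two indices respectively produces
$$
\al\,\mu_{k^\ast}<(m+n)XM_{k^\ast}\quad\text{and}\quad\al\,\mu_{k^\ast}<(m+n)CY_{k^\ast+1}.
$$
Taking the $n$-th power of the first, the $m$-th power of the second, and multiplying, the factor $\mu_{k^\ast}^{m+n}=Y_{k^\ast+1}^{m}M_{k^\ast}^{n}$ cancels on both sides, leaving
$$
\al^{m+n}\le(m+n)^{m+n}X^{n}C^{m},\quad\text{i.e.,}\quad CX^{n/m}\ge\Bigl(\frac{\al}{m+n}\Bigr)^{(m+n)/m}.
$$

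The last step is the elementary inequality $(\al/(m+n))^{(m+n)/m}\ge(\al-n)/m$, valid for all $\al\ge 0$: the convex function $f(\al)=(\al/(m+n))^{(m+n)/m}$ agrees in both value and slope with the linear function $g(\al)=(\al-n)/m$ at $\al=m+n$ (both equal $1$ and $1/m$ there, respectively), so the tangent line $g$ lies below $f$ everywhere. Combined with the preceding display, this contradicts $CX^{n/m}<(\al-n)/m$ and completes the proof. The main technical content lies in the choice of $k^\ast$ and in the exponent bookkeeping when multiplying the two transference inequalities; the exponent $n/(m+n)$ in the definition of $\gamma_k$ is tuned precisely so that $\mu_{k^\ast}^{m+n}$ cancels perfectly, and the maximum in the definition of $\gamma_k$ is essential because one needs the single quantity $\mu_{k^\ast}$ to lower-bound $\gamma_j$ at both $j=k^\ast$ and $j=k^\ast+1$ via the two different branches of the max. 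The apparent slack between the stated threshold $(\al-n)/m$ and the sharper $(\al/(m+n))^{(m+n)/m}$ that the argument actually delivers is exactly the convexity gap between these two functions, vanishing at $\al=m+n$.
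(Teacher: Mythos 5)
Your proof is correct and complete. Since the paper only cites \cite[Proposition 5.1]{Kim} without reproducing the argument, I cannot compare it line-by-line against the original; but your route — apply Cassels' transference (Theorem~\ref{Thm_TP}\eqref{Item_Nec}) to the sequence of best approximations $(\mb{y}_k)$, locate the transition index $k^\ast$ where $XM_k$ and $CY_k$ cross, and exploit the fact that $\gamma_{k^\ast}$ and $\gamma_{k^\ast+1}$ both dominate the single intermediate quantity $\mu_{k^\ast}=(Y_{k^\ast+1}^{m/n}M_{k^\ast})^{n/(m+n)}$ so that the two transference bounds can be multiplied with exact cancellation — is precisely the kind of argument the definition of $\gamma_k$ is designed for, and is almost certainly the intended one. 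A few small remarks: the monotonicity of $Y_k/M_k$ (which you implicitly use to say $\{k: XM_k\ge CY_k\}$ is an initial segment) is automatic from $Y_k\uparrow$, $M_k\downarrow$ for best approximations; you also implicitly use that $C=\|A\mb q-\mb b\|_\bZ>0$ for $\mb b\in B_\al(\{\gamma_k\})$, which does hold since $C=0$ would force $|\mb b\cdot\mb y_k|_\bZ\ll M_k=o(\gamma_k)$, contradicting membership in $B_\al$ — worth a sentence. Your closing observation that the argument actually gives the sharper threshold $(\al/(m+n))^{(m+n)/m}$, with the stated $(\al-n)/m$ being its tangent line at $\al=m+n$, is a nice bonus and explains why the stated constant, while not optimal, is the cleanest linear lower bound.
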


Using this proposition, we are able to prove Theorem \ref{Thm_3}.
\begin{proof}[Proof of Theorem \ref{Thm_3}]
Since the theorem is trivial if $\wh{w}( ^tA) \leq \frac{m}{n}$, we may assume that $\wh{w}( ^tA) > \frac{m}{n}$, i.e. $^tA$ is very singular.

We first consider the case that the rank of the subgroup ${^{t}A}\bZ^m + \bZ^n$ over $\bZ$ is strictly less than $m+n$. If so, we can choose a nonzero $\mb{y}\in\bZ^m$ such that $^tA\mb{y}\in\bZ^n$. Fix such nonzero $\mb{y}\in\bZ^m$. Note that $\wh{w}({^t}A)=\infty$ in this case. Observe that if $|\mb{b}\cdot\mb{y}|_\bZ >0$, then $^tA$ is singular for $\mb{b}$. By Theorem \ref{Thm_1}, the set $\{\mb{b}\in\bR^m : |\mb{b}\cdot\mb{y}|_\bZ >0\}$ is contained in $\Bad_A(\infty)$. Hence, the complement $\bR^m\setminus \Bad_A(\infty)$ is contained in the set $\{\mb{b}\in\bR^m : |\mb{b}\cdot\mb{y}|_\bZ =0\}$, which is a countable union of $(m-1)$-dimensional hyperplanes. This implies that $\dim_H (\bR^m\setminus \Bad_A(\infty))\leq m-1$.

Now assume that the subgroup ${^{t}A}\bZ^m + \bZ^n$ has maximal rank $m+n$ over $\bZ$. Since $\mb{b}\in \Bad_A(\infty)$ if and only if $\mb{b}+\mb{x}\in \Bad_A(\infty)$ for any $\mb{x}\in\bZ^m$, by proposition \ref{Prop_subset}, it is enough to show that for any $\al>0$,
 \eqlabel{Eq_dimgoal}{\dim_H ([0,1]^m\setminus B_\al(\{\ga_k\})) \leq m-\frac{\wh{w}(^tA)-\frac{m}{n}}{\wh{w}(^tA)+1}.}
 Observe that
\[
[0,1]^m\setminus B_\al(\{\ga_k\}) = \limsup_{k\to\infty}\ \{\mb{b}\in[0,1]^m : |\mb{b}\cdot\mb{y}_k|_\bZ \leq \al \ga_k\},
\]
and for each $k$, the set $\{\mb{b}\in[0,1]^m : |\mb{b}\cdot\mb{y}_k|_\bZ \leq \al \ga_k\}$ can be covered with $C_1\frac{Y_k^m}{\ga_k^{m-1}}$ balls of radius $C_2\frac{\ga_k}{Y_k}$ for some absolute constants $C_1,C_2>0$.
Using Hausdorff-Cantelli Lemma \cite[Lemma 3.10]{BD99}, it follows that for any $0\leq s\leq m$, the $s$-dimensional Hausdorff measure
\[
\cH^s ([0,1]^m\setminus B_\al(\{\ga_k\})) =0 \quad\text{if}\quad \sum_{k\geq 2}\frac{Y_k^m}{\ga_k^{m-1}}\left(\frac{\ga_k}{Y_k}\right)^{s} <\infty.
\]
Since $^tA$ is very singular, for any $0<\del<\wh{w}(^tA)-\frac{m}{n}$, we have $Y_{k+1}^{\frac{m}{n}+\del}M_k < 1$ for all sufficiently large $k$. Since $\ga_k \leq Y_k^{-\frac{\del n}{m+n}}$ for all sufficiently large $k$, it follows that
\[
\sum_{k\geq 2}\frac{Y_k^m}{\ga_k^{m-1}}\left(\frac{\ga_k}{Y_k}\right)^{s} \leq \sum_{k\geq 2}Y_k^{m-s-(s-m+1)\frac{\del n}{m+n}}
\] for any $m-1\leq s\leq m$.
For any $m-\frac{\del n}{m+n+\del n}<s<m$, since $m-s- (s-m+1)\frac{\del n}{m+n}<0$ and $Y_k$ increases at least geometrically (see \cite[Lemma 1]{BL05}), it follows that 
$$\sum_{k}Y_k^{m-s-(s-m+1)\frac{\del n}{m+n}}<\infty.$$ 
Hence we have $\dim_H ([0,1]^m\setminus B_\al(\{\ga_k\})) \leq s$. Taking $s\to m-\frac{\del n}{m+n+\del n}$ and $\del \to \wh{w}(^tA)-\frac{m}{n}$, we finally have \eqref{Eq_dimgoal}.
\end{proof}

\section{Homogeneous dynamics}\label{sec5}
In this section, we discuss a relation between the infinitely badly approximable property and the dynamical property mentioned in Subsection \ref{subsec_1.3}. We first prove the following implication.
\begin{prop}\label{Prop_transf} 
For any $x\in \cX$, 
\[\Del(a_t x)\to\infty \text{ as } t\to\infty \implies \Del_0(a_t\pi(x))\to 0 \text{ as } t\to\infty.\]
\end{prop}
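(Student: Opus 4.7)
The plan is to argue by contraposition: suppose $\Del_0(a_t \pi(x))$ does not tend to $0$, produce a constant upper bound for $\Del(a_t x)$ along a subsequence, and contradict the hypothesis. Concretely, if the claim fails, then there is $\delta>0$ and a sequence $t_k\to\infty$ such that $\Del_0(a_{t_k}\pi(x))\geq \delta$. Writing $x=\Lambda+v$ with $\Lambda\in\cX_0$ a unimodular lattice and $v\in\bR^{m+n}$ a translation vector, we have $a_{t_k}x = a_{t_k}\Lambda + a_{t_k}v$, so $\Del(a_{t_k}x)$ is bounded above by the covering radius $\mu(a_{t_k}\Lambda)$ of the lattice $a_{t_k}\Lambda$. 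It therefore suffices to bound this covering radius uniformly in $k$ in terms of $\delta$ alone.

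This is a standard consequence of Minkowski's second theorem. Since $a_{t_k}\Lambda$ is a unimodular lattice in $\bR^{m+n}$ with first successive minimum $\lambda_1(a_{t_k}\Lambda)\geq \delta$, Minkowski's second theorem gives
\[
\lambda_1(a_{t_k}\Lambda)\cdots\lambda_{m+n}(a_{t_k}\Lambda)\leq \frac{2^{m+n}}{\vol(B_1)},
\]
where $B_1$ is the Euclidean unit ball. Hence each successive minimum, and in particular $\lambda_{m+n}(a_{t_k}\Lambda)$, is bounded above by a constant $C_1(\delta,m+n)$. Since the covering radius is controlled by the last successive minimum (e.g. $\mu(\Lambda')\leq \tfrac{\sqrt{m+n}}{2}\,\lambda_{m+n}(\Lambda')$), we obtain $\mu(a_{t_k}\Lambda)\leq C(\delta,m+n)$.

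Combining these steps yields $\Del(a_{t_k}x)\leq C(\delta,m+n)$ along the subsequence $t_k\to\infty$, contradicting $\Del(a_tx)\to\infty$. The argument is essentially soft and involves no hard Diophantine input: the only technical ingredient is Minkowski's second theorem together with the comparison between the last successive minimum and the covering radius, both of which are classical. I do not anticipate any genuine obstacle; the only point that needs care is to keep track of the fact that $a_t$ preserves the covolume so that Minkowski applies uniformly along the orbit, and that the translation vector $v$ plays no role beyond reducing the question to the covering radius of $a_t\Lambda$.
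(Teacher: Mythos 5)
Your argument is correct and follows essentially the same route as the paper: both bound $\Del$ of the grid by the covering radius of the underlying lattice, control the covering radius by the last successive minimum, and use Minkowski's second theorem to convert a lower bound on $\lam_1$ into an upper bound on $\lam_{m+n}$. The paper phrases this directly (producing a parallelepiped fundamental domain to get $\Del(a_tx)\leq (m+n)\lam_{m+n}(a_t\pi(x))$, then $\ll \lam_1(a_t\pi(x))^{-(m+n-1)}$), whereas you run the contrapositive and cite the covering-radius bound, but the mathematical content is the same.
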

\begin{proof}
Given a lattice $\Lam$ in $\bR^{m+n}$, let us denote by $\lam_j(\Lam)$ be the $j$-th successive minimum of $\Lam$, i.e. the infimum of $\lam$ such that the ball in $\bR^{m+n}$ of radius $\lam$ around $0$ contains $j$ independent vectors of $\Lam$. 

Given $x\in \cX$ and $t\in\bR$, let $\mb{v}_1,\dots,\mb{v}_{m+n}$ be independent vectors of the lattice $a_t\pi(x)$ satisfying $\|\mb{v}_i\|\leq \lam_{m+n}(a_t\pi(x))$ for all $i=1,\dots,m+n$. Since the parallelepiped $\Pi=\{\sum_{i}\al_i\mb{v}_i : \forall i, -1\leq \al_i \leq 1 \}$ contains a fundamental domain of the lattice $a_t\pi(x)$, there is a point $\mb{w}$ of the grid $a_t x$ such that $\mb{w}+\Pi$ contains the origin $0$. Hence, it follows that
\[
\Del(a_t x) \leq \|\mb{w}\| \leq \sum_{i=1}^{m+n} \|\mb{v}_i\| \leq (m+n)\lam_{m+n}(a_t\pi(x)).
\] 
Since $\lam_1(a_t\pi(x)) \cdots \lam_{m+n}(a_t\pi(x)) \asymp 1$ by Minkowski's second theorem,
we have $\Del(a_t x) \ll \lam_1(a_t\pi(x))^{-(m+n-1)}$. Therefore, if $\Del(a_t x)\to\infty$ as $t\to \infty$, then $\Del_0(a_t\pi(x))\to 0$ as $t\to\infty$.
\end{proof}

In order to prove Theorem \ref{Thm_4}, we need certain invariance properties for the divergence of the function $\Del$.
Let us denote elements of $\ASL_{m+n}(\bR)$ by $\idist{g,\mb{v}}$, where $g\in\SL_{m+n}(\bR)$ and $\mb{v}\in\bR^{m+n}$.
Consider the action $a_t$ on $\ASL_{m+n}(\bR)$ by the left multiplication of $\idist{a_t,0}$ on $\ASL_{m+n}(\bR)$. 
The \textit{expanding horospherical subgroup} of $\ASL_{m+n}(\bR)$ with respect to $(a_t)_{t\geq 0}$ is given by 
\[
U=\left\{\left\langle\begin{pmatrix} I_m & A  \\ 0 & I_n  \end{pmatrix},\begin{pmatrix} \mb{b} \\ 0\end{pmatrix}\right\rangle : A\in M_{m,n}, \mb{b}\in\bR^m\right\}.
\] 
On the other hand, the \textit{nonexpanding horoshperical subgroup} of $\ASL_{m+n}(\bR)$ with respect to $(a_t)_{t\geq 0}$ is given by
\[
P =\left\{\left\langle\begin{pmatrix} S & 0  \\ R & Q  \end{pmatrix},\begin{pmatrix} 0 \\ \mb{c}\end{pmatrix}\right\rangle : 
\begin{aligned}
    &S\in M_{m,m},\ Q\in M_{n,n},\ R\in M_{n,m},\\
    &\mb{c}\in\bR^n,\ \det S \det Q =1
\end{aligned}\right\}.
\]
\begin{prop}\label{Prop_inv} For any $x\in \cX$ and any $p\in P$,
\[
\Del(a_tx)\to\infty \text{ as } t\to\infty \iff \Del(a_t px)\to\infty \text{ as } t\to\infty.
\]
\end{prop}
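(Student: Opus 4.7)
The plan is to reduce the statement to the (standard) fact that multiplication on a unimodular grid by a bounded family of elements of $\ASL_{m+n}(\bR)$ changes the shortest vector length by at most a bounded multiplicative and additive constant. The essential point is that conjugation of an element of the nonexpanding horospherical subgroup $\wt{H}$ by $\idist{a_t,0}$ stays in a bounded subset of $\ASL_{m+n}(\bR)$ for $t\geq 0$, so the $a_t$-orbits of $x$ and of $px$ are related by a bounded perturbation at each time.

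Concretely, write $p=\idist{h,\mb{w}}$ with $h=\left(\begin{smallmatrix}P&0\\ R&Q\end{smallmatrix}\right)$ and $\mb{w}=\left(\begin{smallmatrix}0\\ \mb{c}\end{smallmatrix}\right)$. A direct computation with the $\ASL$-product rule gives
\[
\idist{a_t,0}\idist{h,\mb{w}}\idist{a_{-t},0}=\idist{a_th a_{-t},\, a_t\mb{w}},
\]
with $a_tha_{-t}=\left(\begin{smallmatrix}P&0\\ e^{-(1/m+1/n)t}R&Q\end{smallmatrix}\right)$ and $a_t\mb{w}=\left(\begin{smallmatrix}0\\ e^{-t/n}\mb{c}\end{smallmatrix}\right)$. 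Setting $q_t\defn a_tha_{-t}$ and $\mb{u}_t\defn a_t\mb{w}$, both $q_t$, $q_t^{-1}$ and $\mb{u}_t$ remain uniformly bounded for $t\geq 0$, converging to $\mathrm{diag}(P,Q)$ and $0$ respectively. Using that the $a_t$-action on $\cX$ is by left multiplication, the grid $a_tpx$ equals $q_t(a_tx)+\mb{u}_t$.

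Finally, let $K\geq 1$ be a common bound for $\|q_t\|$, $\|q_t^{-1}\|$ and $\|\mb{u}_t\|$. Choosing $\mb{z}_t\in a_tx$ with $\|\mb{z}_t\|=\Del(a_tx)$ yields $\Del(a_tpx)\leq\|q_t\mb{z}_t+\mb{u}_t\|\leq K\Del(a_tx)+K$, and choosing $\mb{z}_t'\in a_tx$ with $\|q_t\mb{z}_t'+\mb{u}_t\|=\Del(a_tpx)$ gives $\|\mb{z}_t'\|\leq K\Del(a_tpx)+K^2$, so $\Del(a_tx)\leq K\Del(a_tpx)+K^2$. These two-sided comparisons immediately yield the equivalence of $\Del(a_tx)\to\infty$ and $\Del(a_tpx)\to\infty$. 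The only thing to be careful about is bookkeeping of the $\ASL$-multiplication (the appearance of the translation term $h\mb{v}+\mb{w}$), which is why we conjugate rather than work directly with left multiplication; there is no serious obstacle.
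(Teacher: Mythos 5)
Your proof is correct and takes essentially the same approach as the paper: conjugate $p$ by $a_t$, observe that $a_tpa_{-t}$ stays bounded (in fact converges) for $t\geq 0$, and deduce the two-sided comparison between $\Del(a_tx)$ and $\Del(a_tpx)$. If anything, your explicit bookkeeping of the additive constant coming from the translation part $\mb{u}_t$ is slightly more careful than the paper's terse ``$\|a_tp\mb{v}\|\asymp\|a_t\mb{v}\|$'', which as written suggests a purely multiplicative comparison; your version makes precise why that comparison still suffices for the equivalence of divergence.
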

\begin{proof}
The following is a key observation: for $p=\left\langle\begin{psmallmatrix} S & 0  \\ R & Q  \end{psmallmatrix},\begin{psmallmatrix} 0 \\ \mb{c}\end{psmallmatrix}\right\rangle \in P$,
\[
a_t \left\langle\begin{psmallmatrix} S & 0  \\ R & Q  \end{psmallmatrix},\begin{psmallmatrix} 0 \\ \mb{c}\end{psmallmatrix}\right\rangle a_{-t} 
= \left\langle\begin{psmallmatrix} S & 0  \\ e^{-\left(\frac{1}{m}+\frac{1}{n}\right)t}R & Q  \end{psmallmatrix},\begin{psmallmatrix} 0 \\ e^{-\frac{1}{n}t}\mb{c}\end{psmallmatrix}\right\rangle.
\] 
Therefore, for any $t\geq 0$ and any point $\mb{v}$ of the grid $x$, 
\[
\|a_t p\mb{v}\| = \|a_t p a_{-t} a_t\mb{v}\| \asymp \|a_t \mb{v}\|, 
\]
where the implied constant depends only on $p$. This concludes the proof of the proposition.
\end{proof}

\begin{proof}[Proof of Theorem \ref{Thm_4}]
Assume $m=1$ and $n\geq 2$. By the upper bound \eqref{Eq_upper}, it is enough to show that 
\[
\dim_H\{x\in \cX : \Del(a_tx)\to\infty\ \text{as}\ t\to\infty\} \geq \dim_H \cX -\frac{n}{n+1}.
\]
Since the product map $P\times U \to \ASL_{m+n}(\bR)$ is a local diffeomorphism, every element of a neighborhood of identity of $\ASL_{m+n}(\bR)$ can be written as $pu$ where $p\in P$ and $u\in U$. Therefore, using \eqref{Eq_char}, Proposition \ref{Prop_inv}, and Theorem \ref{Thm_2}, we have
\[\begin{split}
\dim_H\{x\in \cX : \Del(a_tx)\to\infty\ \text{as}\ t\to\infty\} &\geq \dim_{H} \Bad(\infty) + \dim_H P\\
& = \dim_H \cX - \frac{n}{n+1}.
\end{split}\]
\end{proof}

\bibliography{ref}
\bibliographystyle{amsalpha}

\end{document}